\topskip \setlength{\parindent}{0pt} \setlength{\parskip}{5pt plus
\theoremstyle{remark}
\theoremstyle{plain}
\newtheorem{theorem}{Theorem}[section]
\newtheorem{lemma}[theorem]{Lemma}
\begin{document}
\title{On ascent sequences avoiding 021 and a pattern of length four}
\author{Toufik Mansour\\
\small Department of Mathematics, University of Haifa, 3498838 Haifa, Israel\\[-0.8ex]
\small\texttt{tmansour@univ.haifa.ac.il}\\[1.8ex]
Mark Shattuck\\
\small Department of Mathematics, University of Tennessee, 37996 Knoxville, TN\\[-0.8ex]
\small\texttt{mark.shattuck2@gmail.com}\\[1.8ex]
}
\date{\small }
\maketitle

\begin{abstract}
Ascent sequences of length $n$ avoiding the pattern $021$ are enumerated by the $n$-th Catalan number $C_n=\frac{1}{n+1}\binom{2n}{n}$.  In this paper, we extend this result and enumerate ascent sequences avoiding $\{021,\tau\}$, where $\tau$ is a pattern of length four.  We in turn identify all of the corresponding Wilf-equivalence classes and find generating function formulas corresponding to each class.  In a couple of cases, we make use of an auxiliary statistic and the kernel method to ascertain the generating function. In several cases, our work of enumeration is shortened by establishing the equivalence of $\{021,\tau\}$- and $\{021,\tau'\}$-avoiders of a given length through an explicit bijection.  As a consequence of our results, one obtains new combinatorial interpretations in terms of ascent sequences for several of the entries in the OEIS.
\end{abstract}

\noindent{\em Keywords:} pattern avoidance, ascent sequence, generating function, bijective proof, kernel method.

\noindent 2010 {\em Mathematics Subject Classification}:  05A15, 05A05.

\section{Introduction}

Given a sequence $w=w_1\cdots w_n$, let $\text{asc}(w)$ denote the number of \emph{ascents} of $w$, i.e., the number of indices $1 \leq i \leq n-1$ such that $w_i<w_{i+1}$.  An \emph{ascent sequence} $a_1\cdots a_n$ is a word on the alphabet of non-negative integers such that $a_1=0$ and $a_i \leq \text{asc}(a_1\cdots a_{i-1})+1$ for $1<i\leq n$.  For example, the sequence $0101243503$ is an ascent sequence, whereas $0100125$ is not since 5 exceeds $\text{asc}(010012)+1=4$.   Ascent sequences were introduced in \cite{BCD}, where they were related to the (2+2)-free posets and the generating function was determined,  and have subsequently been studied with connections being made to several other discrete structures.  See, e.g., \cite{CDK,DP,KR} as well as \cite[Section 3.2.2]{K} for further information.

By a \emph{pattern}, we mean a sequence of non-negative integers, where repetitions are allowed, that contains all of the letters in $\{0,1,\ldots,\ell\}$ for some $\ell \geq0$.  Let $\pi=\pi_1\pi_2\cdots \pi_n$ be an arbitrary sequence of non-negative integers and let $\tau=\tau_1\tau_2\cdots\tau_m$ be a pattern.  Then it is said that $\pi$ \emph{contains} $\tau$ (in the classical sense) if there exists a subsequence of $\pi$ that is order isomorphic to $\tau$.  More precisely, there exist indices $i_1,\ldots,i_m$ where $1\leq i_1<\cdots<i_m\leq n$ such that $\pi_{i_j}\,x\,\pi_{i_k}$ if and only if $\tau_j\,x\,\tau_k$ for all $j,k \in [m]=\{1,\ldots,m\}$ and $x \in \{<,>,=\}$.  If no such subsequence occurs, then $\pi$ is said to \emph{avoid} $\tau$.  For example, the ascent sequence $01013102412$ has three occurrences of the pattern $100$, namely, the subsequences $100$, $311$ and $322$, but is seen to avoid the pattern $1230$.  Let $A_n$ be the set of all ascent sequences of length $n$ and let $A_n(\alpha_1,\ldots,\alpha_r)$ denote the subset of $A_n$ whose members avoid each of the patterns $\alpha_1,\ldots,\alpha_r$.

The pattern avoidance problem on ascent sequences was first considered by Duncan and Steingr\'{\i}msson \cite{DS}, who established several results involving a single pattern of length up to four; see also \cite{Chen,MS1,Y} for further related results.  The problem of avoiding two patterns of length three was later addressed by Baxter and Pudwell  \cite{BP}, which was extended to triples of length three patterns in \cite{CM0,CM}.  Further results in this direction include those of Callan et al.~\cite{CMS}, who found all pattern pairs $(\alpha,\beta)$ of length four such that $|A_n(\alpha,\beta)|=C_n$ for $n\geq1$, and of Mansour and Shattuck \cite{MS2}, who found all pairs of length three and four such that $|A_n(\alpha,\beta)|=F_{2n-1}$, where $F_m$ denotes the $m$-th Fibonacci number.

Let $B_n=A_n(021)$. In \cite{DS}, it was shown that $|B_n|=C_n$ for all $n\geq0$.  This result was extended by Chen et al.~\cite{Chen}, who showed via a bijection that the bivariate distribution of the ascents and right-to-left minimum statistics on $B_n$ is the same as that of the corresponding distribution on the set of 132-avoiding permutations of length $n$ for all $n$, which answers an earlier question raised in \cite{DS}.  Note that an ascent sequence belongs to $B_n$ if and only if the subsequence consisting of its positive letters is nondecreasing. In this paper, we consider the pattern-avoidance problem on $B_n$ and find enumerative formulas for several restricted subsets of $B_n$.

Given a pattern $\tau$, let $B_n(\tau)=A_n(021,\tau)$ for $n \geq 1$, with $B_0(\tau)$ consisting of the single empty ascent sequence of length zero (which will often be denoted by $\varepsilon$).  Let $b_n(\tau)=|B_n(\tau)|$ for $n\geq0$ and $B(\tau)=\cup_{n\geq0}B_n(\tau)$.  Define the generating function $f_\tau=f_\tau(x)$ by $f_\tau=\sum_{n\geq0}b_n(\tau)x^n$; that is, $f_\tau$ enumerates all members of $B(\tau)$ according to length.  Here, we find $f_\tau$ when $|\tau|=4$ and thereby determine all of the Wilf-equivalence classes corresponding to a single pattern of length four on $B_n$.  See \cite{MY}, where an analogous problem concerning the enumeration of inversion sequences avoiding 021 and another pattern is investigated.

The organization of this paper is as follows. In the next section, we find $f_\tau$ in all cases where $\tau$ is of length four and whose Wilf-equivalence class is nontrivial.  In several instances, we define bijections between $B_n(\tau)$ and $B_n(\tau')$ to complete the proof of the result, once the generating function is ascertained in one of the cases.  In the final section, we find $f_\tau$ for the remaining $\tau$ for which the corresponding Wilf class is trivial.  Taken together with the results from the second section, one then has an explicit formula for $f_\tau$ in all cases where $\tau$ is of length four.  The generating function corresponding to each nontrivial Wilf-equivalence class is given below in Table \ref{tab1}, along with the first several terms in its Taylor series expansion.

\begin{table}[htp]
{\fontsize{7}{7}\selectfont
\begin{tabular}{|l|l|}\hline
$\{0010,0100,0110,0123\}$& $\frac{(1-x)(1-4x+4x^2+x^3)}{(1-2x)^3}$\\
&$1+x+2x^2+5x^3+13x^4+34x^5+88x^6+224x^7+560x^8+1376x^9+3328x^{10}+7936x^{11}+\cdots$\\\hline
$\{0011,0112\}$&$\frac{1-4x+5x^2-x^3}{(1-x)(1-2x)^2}$\\
&$1+x+2x^2+5x^3+13x^4+33x^5+81x^6+193x^7+449x^8+1025x^9+2305x^{10}+5121x^{11}+\cdots$\\\hline
$\{0012,0102\}$& $\frac{1-4x+6x^2-3x^3+x^4}{(1-x)^3(1-2x)}$\\
&$1+x+2x^2+5x^3+13x^4+32x^5+74x^6+163x^7+347x^8+722x^9+1480x^{10}+3005x^{11}+\cdots$\\\hline
$\{0101,0120,0122\}$& $\frac{1-2x}{1-3x+x^2}$\\
&$1+x+2x^2+5x^3+13x^4+34x^5+89x^6+233x^7+610x^8+1597x^9+4181x^{10}+10946x^{11}+\cdots$\\\hline
$\{1000,1100\}$& $\frac{(1-x)(1-8x+24x^2-31x^3+13x^4+2x^5+2x^6)}{(1-2x)^5}$\\
&$1+x+2x^2+5x^3+14x^4+41x^5+122x^6+362x^7+1060x^8+3048x^9+8592x^{10}+23744x^{11}+\cdots$\\\hline
$\{1001,1011,1101\}$& $\frac{1-3x+3x^2-\sqrt{(1-3x+x^2)^2-4x^3(1-x)}}{2x^2}$\\
&$1+x+2x^2+5x^3+14x^4+41x^5+123x^6+376x^7+1168x^8+3678x^9+11716x^{10}+37688x^{11}+\cdots$\\\hline
$\{1010,1120\}$&$\frac{(1-x)(1-5x+7x^2-x^3)}{(1-2x)^2(1-3x+x^2)}$\\
&$1+x+2x^2+5x^3+14x^4+41x^5+121x^6+354x^7+1021x^8+2901x^9+8130x^{10}+22513x^{11}+\cdots$\\\hline
$\{1020,1022\}$& $\frac{1-9x+32x^2-56x^3+49x^4-19x^5+x^6}{(1-x)(1-2x)^3(1-3x+x^2)}$\\
&$1+x+2x^2+5x^3+14x^4+41x^5+120x^6+344x^7+961x^8+2620x^9+6996x^{10}+18369x^{11}+\cdots$\\\hline
$\{1200,1220,1230\}$& $\frac{1-7x+17x^2-16x^3+5x^4-x^5}{(1-2x)(1-3x+x^2)^2}$\\
&$1+x+2x^2+5x^3+14x^4+41x^5+122x^6+364x^7+1082x^8+3195x^9+9362x^{10}+27219x^{11}+\cdots$\\\hline
\end{tabular}}
\caption{The nontrivial Wilf-equivalence classes for patterns of length four on $B_n$, along with the corresponding generating functions.}\label{tab1}
\end{table}

\section{Nontrivial Wilf classes for patterns of length four}

In this section, we establish the generating function formulas given in Table \ref{tab1} above.  Our work is shortened somewhat in this regard by making the following preliminary observations.  First, the generating function given for the fourth entry corresponding to the Wilf class containing the patterns $0101$, $0120$ and $0122$ is already known and occurs as part of a larger result. See \cite[Proposition 3.3]{MS2}, which gives each pair $(\alpha,\beta)$ where $|\alpha|=3$ and $\beta|=4$ such that $|A_n(\alpha,\beta)|=F_{2n-1}$ for all $n\geq1$.

Furthermore, the third entry in Table \ref{tab1} corresponding to the patterns $0012$ and $0102$ may be reasoned as follows.  By \cite[Lemma 2.4]{DS}, since both $0012$ and $0102$ are contained in the larger pattern $01012$, members of $A_n$ avoiding either pattern actually correspond to set partitions, represented sequentially on the non-negative integers as restricted growth functions.  Let $P_n$ denote the set of all partitions of $[n]$ and $P_n(\alpha_1,\ldots,\alpha_r)$ the subset of $P_n$ whose members when represented sequentially avoid each of the patterns $\alpha_1,\ldots,\alpha_r$. Then we have that the members of $B_n(0012)$ and $B_n(0102)$ are synonymous with those in $P_n(0012,0121)$ and $P_n(0102,0121)$, respectively, as avoiding $021$ is the same as avoiding $0121$ for set partitions due to the restricted growth condition.  Combining Theorem 4.9, Example 4.16 and Proposition 4.23 from \cite{JMS}, the latter two pattern pairs are seen to be equivalent on $P_n$ for all $n$ with the generating function formula as stated.

We now turn our attention to the remaining Wilf classes in Table \ref{tab1} and establish the first entry. By a \emph{staircase} word, we will mean throughout a sequence $w=w_1w_2\cdots$ such that $w_{i+1}=w_i$ or $w_i+1$ for all $i$.  Given a nonnegative integer $m$, let $\mathcal{T}_m$ denote the set of all (nonempty) staircase words of finite length having first letter $m$.

\begin{theorem}\label{0010th}
If $\tau \in \{0010,0100,0110,0123\}$, then
\begin{equation}\label{0010the1}
f_\tau(x)=\frac{(1-x)(1-4x+4x^2+x^3)}{(1-2x)^3}.
\end{equation}
\end{theorem}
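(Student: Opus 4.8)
The starting point is the structural description of $B_n$ recalled in the introduction: a member of $B_n$ is exactly a word on $\{0,1,2,\dots\}$ beginning with $0$ whose subsequence of positive letters is nondecreasing. I would first pin down how the ascent condition constrains a maximal run of positive letters. If such a run begins immediately after a $0$ with no positive letter recorded so far, then its first letter is forced to be $1$ and, since each strict increase raises $\text{asc}$ by exactly one, the run must be a staircase word from $1$; this is precisely the role of $\mathcal{T}_1$. More generally, if a positive run starts after some interior zeros with $m$ the largest positive letter seen so far, then its first letter lies in $\{m,m+1\}$, and thereafter the admissible increment at each step is governed by the current \emph{slack} $\sigma=\text{asc}+1-(\text{current letter})$, which begins at $2$ or $1$ and can only stay put or decrease toward $1$. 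The key observation is that $\sigma$ never exceeds $2$, so a positive run is encoded by a finite two-state transfer system.

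Next I would exploit the shape forced by avoidance of $0010$. Writing the $0$'s of a word in $B_n(0010)$ in order, a positive letter creates an occurrence of $0010$ exactly when it has at least two $0$'s before it and at least one $0$ after it; hence in a $0010$-avoider every letter strictly between the second zero and the last zero must itself be $0$. Thus a word in $B(0010)$ with at least two zeros has the form $0\,P_1\,0^{t}\,P_2$ with $t\ge 1$ and $P_1,P_2$ (possibly empty) blocks of positive letters, together with the degenerate cases of one zero ($0P$) and a pure prefix of zeros ($0^{t+1}P_2$). Here $P_1$ is a staircase word from $1$, $P_2$ is the two-state object described above, and nothing follows $P_2$. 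Translating each block to its generating function, namely $\frac{x}{1-2x}$ for a nonempty $\mathcal{T}_1$ block, $\frac{x}{1-x}$ for the interior run $0^{t}$, and $1+\frac{x}{1-2x}+\frac{x(1-x)}{(1-2x)^2}=\frac{(1-x)^2}{(1-2x)^2}$ for the $P_2$ factor, and summing the contributions of the three cases yields
\[
f_\tau(x)=1+\frac{x(1-x)}{1-2x}+\frac{x^2}{1-2x}+\frac{x^3(1-x)}{(1-2x)^3},
\]
which simplifies to the claimed \eqref{0010the1}.

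Finally, I would establish that $0100$, $0110$ and $0123$ lie in the same Wilf class. Avoidance of $0123$ is the cleanest: since the positive letters are nondecreasing and the initial $0$ precedes everything, a word in $B_n$ avoids $0123$ if and only if its positive letters take at most two distinct values, and a direct transfer-matrix count of such words should reproduce \eqref{0010the1}. For $0100$ and $0110$ I would extract the analogous block shapes; avoidance of $0100$ forces all positive letters to lie at or after the second-to-last zero, giving the mirrored shape $0^{b}P_1\,0\,P_2$, while avoidance of $0110$ forbids a repeated positive value between the first and last zeros. I would then either rerun the block generating-function computation or, preferably, build explicit length-preserving bijections $B_n(0010)\to B_n(0100)$ and $B_n(0010)\to B_n(0110)$ that rearrange the zero-runs while respecting the staircase and slack data.

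The main obstacle I anticipate is the generating-function bookkeeping for the second positive block $P_2$: one must verify that the ascent condition is faithfully captured by the slack statistic, that the slack genuinely stays in $\{1,2\}$ (so that the transfer system is finite), and that the degenerate cases are each counted exactly once. A secondary difficulty is matching the structurally dissimilar $0110$-avoiders to the $0010$-avoiders; here a direct bijection is not transparent, and it may be cleaner to recompute the generating function from the $0110$-block decomposition and check that it again collapses to \eqref{0010the1}.
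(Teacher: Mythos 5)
Your computation of $f_{0010}$ is correct and is essentially the paper's own argument: the paper uses the same decomposition $0\,P_1\,0^{t}\,P_2$ (with the nondecreasing words covering your empty, $0P$ and $0^{t+1}P_2$ cases), and its factor $1+\frac{2x}{1-2x}+\frac{x^2}{(1-2x)^2}$ for the terminal block is exactly your $\frac{(1-x)^2}{(1-2x)^2}$. Two small caveats: your prose allows $P_1=\varepsilon$ in the main shape while also listing $0^{t+1}P_2$ as a separate case (your displayed sum implicitly takes $P_1$ nonempty, so the arithmetic is fine, but the case split should be stated disjointly); and your general claim that a positive run opening after interior zeros must begin with $m$ or $m+1$ is false for arbitrary members of $B_n$ (each additional interior zero-run raises $\mathrm{asc}$ above $\max$), though it is true for the specific shapes occurring here because the prefix $0P_10^t$ has exactly $\max(P_1)$ ascents. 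The slack analysis itself ($\sigma\in\{1,2\}$, hence at most one jump in $P_2$) is sound and matches the paper's use of $\mathcal{T}_j$, $\mathcal{T}_{j+1}$ and the one-jump tail.

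The gap is that the theorem asserts the formula for four patterns and you have proved it for one. For $0123$ your characterization (a member of $B_n$ avoids $0123$ iff it has at most two distinct positive letters) is correct, and the resulting direct count does collapse to \eqref{0010the1}: the contribution $\frac{1}{1-x}+\frac{x^2}{(1-x)(1-2x)}+\sum_{k\ge1}k\,\frac{x^{2k}}{(1-x)^{2k+1}}\cdot\frac{x}{1-2x}$ simplifies to $\frac{1-x}{1-2x}+\frac{x^3(1-x)}{(1-2x)^3}$. This is genuinely simpler than the paper, which instead builds a rather elaborate case-by-case bijection $h:B_n(0010)\to B_n(0123)$; but you must actually carry the computation out rather than assert it "should" work. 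For $0100$ the zero-run swap $0P_10^tP_2\leftrightarrow 0^tP_10P_2$ is precisely the paper's bijection, and it succeeds because the set of admissible tails $P_2$ depends only on $\max(P_1)$, which is preserved — again a check you gesture at but do not make. The substantive hole is $0110$: you supply neither a bijection nor a completed enumeration, and you yourself flag it as the unresolved case. It is also the least routine one — the paper's bijection $g$ with $B_n(0010)$ must branch on whether the smallest repeated positive letter of the $0110$-avoider has an occurrence separated from its first by a $0$ — so until that case is closed the theorem is established for only one of the four patterns in the class.
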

\begin{proof}
We first compute $f_{0010}$.  To do so, note that $\pi \in B(0010)$ is either nondecreasing or of one of the following two forms:
\begin{align*}
&\bullet\,01^{a_1}\cdots j^{a_j}0^r\sigma, \quad j \geq 1,\\
&\bullet\,01^{a_1}\cdots j^{a_j}0^rj^t(j+1)^{a_{j+1}}\cdots\ell^{a_\ell}\rho, \quad 1 \leq j \leq \ell,
\end{align*}
where all exponents indicating run lengths of the various letters are positive, $\sigma$ is empty or belongs to $\mathcal{T}_j$ or $\mathcal{T}_{j+1}$ and $\rho \in \mathcal{T}_{\ell+2}$.  Combining the prior cases above implies
\begin{align*}
f_{0010}&=\frac{1-x}{1-2x}+\sum_{j\geq1}\frac{x^{j+2}}{(1-x)^{j+1}}\left(1+\frac{2x}{1-2x}+\frac{x^2}{(1-2x)^2}\right)\\
&=\frac{1-x}{1-2x}+\frac{x^2(1-x)}{(1-2x)^2}\sum_{j\geq1}\left(\frac{x}{1-x}\right)^j=\frac{1-x}{1-2x}+\frac{x^3(1-x)}{(1-2x)^3},
\end{align*}
which yields the stated formula for $f_\tau$ in the case $\tau=0010$.

To complete the proof of \eqref{0010the1}, we define a bijection between $B_n(0010)$ and $B_n(\tau)$ for the three remaining patterns $\tau$.  To define a bijection with $B_n(0100)$, consider replacing the section $01^{a_1}\cdots j^{a_j}0^r$ with $0^r1^{a_1}\cdots j^{a_j}0$ in the two forms stated above for $\pi \in B_n(0010)$ containing a descent. For the pattern $\tau=0110$, we proceed as follows in defining a mapping $g$ between $B_n(0010)$ and $B_n(0110)$.  Let $\pi \in B_n(0010)$.  If $\pi=0^{a_0}1^{a_1}\cdots j^{a_j}$ for some $j \geq0$, then let $g(\pi)=0^{a_0}10^{a_1-1}\cdots j0^{a_{j}-1}$.  So assume $\pi$ contains a second run of 0 and is of one of the two forms stated above.  If at least one $j$ occurs to the right of the string $0^r$ in either of the two forms, then replace the initial section $01^{a_1}\cdots j^{a_j}0^r$ with $0^r10^{a_1-1}\cdots(j-1)0^{a_{j-1}-1}j0^{a_j}$ in either case.  Otherwise, if no $j$ occurs to the right of the string $0^r$, i.e., if $\pi=01^{a_1}\cdots j^{a_j}0^r\alpha$, where $\alpha$ is either empty or belongs to $\mathcal{T}_{j+1}$, then let $g(\pi)=0^r10^{a_1-1}\cdots(j-1)0^{a_{j-1}-1}j^{a_j+1}\alpha$.  Note that $g$ may be reversed by considering whether or not a member of $B_n(0110)$ contains a repeated positive letter, and if so, whether or not a 0 occurs between the first and second occurrence of the smallest such letter. Hence, we have that $g$ provides the desired bijection between $B_n(0010)$ and $B_n(0110)$.

We now turn to $\tau=0123$.  First, given a staircase word $w=w_1\cdots w_m$ and a fixed $r\geq1$, let $c_r(w)$ denote the word $c=c_1\cdots c_m$ on $\{0,r\}$ where $c_1=r$ and $c_j$ for $2 \leq j \leq m$ is equal to $r$ or $0$ depending on whether or not $w_j=w_{j-1}$ or $w_j=w_{j-1}+1$. To define a bijection $h$ between $B_n(0010)$ and $B_n(0123)$, we first pair certain members of the respective sets accordingly as follows:
\begin{align*}
&\bullet\, 0^{a_0}1^{a_1}\cdots k^{a_k} \longleftrightarrow 0^{a_0}10^{a_1-1}\cdots10^{a_{k}-1}, \quad k \geq0,\\
&\bullet\, 01^{a_1}\cdots j^{a_j}0^rj^s\rho \longleftrightarrow 0^{r+1}1^{a_1}0^{a_2}\cdots1^{a_{j-1}}0^{a_j}1^sc_2(\rho), \quad j \text{ even,}\\
&\bullet\, 01^{a_1}\cdots j^{a_j}0^rj^s\rho \longleftrightarrow 01^{a_1}0^{a_2}\cdots1^{a_{j-2}}0^{a_{j-1}}1^{a_j}0^r1^sc_2(\rho), \quad j \text{ odd,}\\
&\bullet\, 01^{a_1}\cdots j^{a_j}0^rj^s \longleftrightarrow 0^{s+1}1^rc_2(1^{a_1}\cdots j^{a_j}),
\end{align*}
where $j,r\geq 1$, $s \geq0$, $a_i\geq1$ for each $i$ and $\rho \in \mathcal{T}_{j+1}$.  Note that members of $B_n(0010)$ in the last three forms must start with a single zero, as there is a second run of $0$ (whose length is unrestricted).

Now suppose $\pi \in B_n(0010)$ is expressible as
$$\pi=01^{a_1}\cdots j^{a_j}0^rj^t(j+1)^{a_{j+1}}\cdots\ell^{a_\ell}\alpha$$
for some $1 \leq j \leq \ell$ and $r,t \geq 1$, where $\alpha \in \mathcal{T}_{\ell+2}$.  We define $h$ for such $\pi$ by considering cases on the parity of $\ell$ and $j$ as follows:
\begin{align*}
&\bullet\,\ell,j \text{ even:}\,\,\, \pi \leftrightarrow 0^{t+1}1^{a_1}0^{a_2}\cdots1^{a_{\ell-1}}0^{a_\ell}1^rc_{\frac{j}{2}+2}(\alpha), \\
&\bullet\,\ell \text{ even,}\,\,j \text{ odd:}\,\,\, \pi \leftrightarrow 01^{a_1}0^{a_2}\cdots1^{a_{\ell-1}}0^{a_\ell}1^r0^tc_{\frac{j+5}{2}}(\alpha), \\
&\bullet\,\ell \text{ odd,}\,\,j \text{ even:}\,\,\, \pi \leftrightarrow 0^{t+1}1^{a_1}0^{a_2}\cdots1^{a_{\ell-2}}0^{a_{\ell-1}}1^{a_\ell}0^rc_{\frac{j}{2}+2}(\alpha),\\
&\bullet\,\ell,j \text{ odd:}\,\,\, \pi \leftrightarrow 01^{a_1}0^{a_2}\cdots1^{a_{\ell-2}}0^{a_{\ell-1}}1^{a_\ell}0^r1^tc_{\frac{j+5}{2}}(\alpha).
\end{align*}
Note that if $\ell$ is even in the preceding cases, then there are $\frac{\ell}{2}+1$ runs of 1 (and hence the same number of ascents) prior to the terminal section $c_{\lfloor(j+5)/2\rfloor}(\alpha)$ of $h(\pi)$, with $3 \leq \lfloor\frac{j+5}{2}\rfloor \leq \frac{\ell}{2}+2$. Thus, $h$ is well-defined in these cases.  Likewise, if $\ell$ is odd, then there are $\frac{\ell+1}{2}$ runs of $1$ in $h(\pi)$ if $j$ is even, with $3 \leq \frac{j}{2}+2 \leq \frac{\ell+3}{2}$, and there are $\frac{\ell+3}{2}$ runs of $1$ if $j$ is odd, with $3 \leq \frac{j+5}{2} \leq \frac{\ell+5}{2}$.  One may verify that $h$ is reversible in all of the cases defined above, with exactly one case applying to each member of $B_n(0010)$ or $B_n(0123)$.  Hence, the mapping $h$ provides the desired bijection between $B_n(0010)$ and $B_n(0123)$, which completes the proof.
\end{proof}

We now establish the second entry in Table \ref{tab1} involving the patterns $0011$ and $0112$.

\begin{theorem}\label{0011th}
If $\tau=0011$ or $0112$, then
\begin{equation}\label{0011the1}
f_\tau(x)=\frac{1-4x+5x^2-x^3}{(1-x)(1-2x)^2}.
\end{equation}
\end{theorem}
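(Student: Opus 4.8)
The plan is to compute one of $f_{0011}$ and $f_{0112}$ directly from a structural description of the avoiders, and then to deduce that the other equals the same rational function, either by an explicit length-preserving bijection between $B_n(0011)$ and $B_n(0112)$ or by carrying out the analogous direct computation. The basic tool is the characterization recalled in the introduction: $\pi\in B$ precisely when the subsequence of positive letters of $\pi$ is nondecreasing. Consequently every $\pi\in B$ is a shuffle of runs of $0$'s with a globally nondecreasing sequence of positive letters, the only descents being descents to $0$, and in particular $\pi_1=0$.

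It is convenient to treat $0112$ first, as the avoidance condition is especially clean. In an occurrence of $0112$ the two equal middle entries form a repeated value $b$, with a strictly smaller entry before it and a strictly larger entry after it; since $\pi_1=0<b$ always lies before any positive $b$, the ``smaller entry before'' requirement is automatic. Hence $\pi\in B(0112)$ if and only if no repeated positive value has a strictly larger positive value occurring after it; equivalently, among the positive letters only the largest value may be repeated. I would then decompose $B(0112)$ according to the largest positive value and its multiplicity, with the remaining positive values each occurring once and the $0$'s interleaved subject only to the ascent condition, and translate this into a generating function as a sum of geometric-type series in $x$, exactly as in the computation of $f_{0010}$ in Theorem \ref{0010th}. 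Summing these contributions should collapse to $\frac{1-4x+5x^2-x^3}{(1-x)(1-2x)^2}$.

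For $B(0011)$ the analysis is parallel but with a different side condition. An occurrence of $0011$ with both small entries positive needs two distinct repeated positive values, so avoidance forces at most one positive value to be repeated; an occurrence with the small entries equal to $0$ needs two $0$'s before two copies of a repeated value, so (taking the forced $0$ at position $1$ into account) avoidance forces at most one further $0$ to precede the second-from-last copy of the unique repeated value. I would split $B(0011)$ into the case of a strictly increasing positive subsequence and the case of a single repeated value $v$, in the latter case controlling the placement of the $0$'s relative to the copies of $v$, and again sum the resulting series. To establish $f_{0112}=f_{0011}$ I would prefer an explicit bijection $B_n(0011)\to B_n(0112)$ that relocates the unique repeated value to the top while redistributing the interleaved $0$'s; failing a clean such map, the independently computed generating functions for the two decompositions serve as the fallback proof of equality.

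I expect the main obstacle to be the bookkeeping in these decompositions rather than any conceptual hurdle. The delicate point is the role of $0$ in each pattern: for $0011$ the precise condition involves how many $0$'s precede the \emph{second-from-last} copy of the repeated value (with $0$'s permitted among its copies), and getting this exactly right while preserving the ascent-sequence constraint is what makes the competing geometric series combine to the stated numerator $1-4x+5x^2-x^3$. Correspondingly, the crux of the Wilf-equivalence is either to verify that the proposed bijection respects both $021$- and pattern-avoidance in both directions, or to confirm that the two separately derived rational functions genuinely coincide.
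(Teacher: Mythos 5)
Your reading of $0112$ is correct and, if carried out, would give a more self-contained route than the paper takes: since the positive letters of a $021$-avoider form a nondecreasing subsequence, $\pi$ avoids $0112$ exactly when only the largest positive value may be repeated, and the resulting sum $\frac{1}{1-x}+\sum_{m\geq1}\sum_{r\geq1}\left(\frac{x}{1-x}\right)^{m+r}=\frac{1}{1-x}+\frac{x^2}{(1-2x)^2}$ does simplify to the stated function. (The paper instead computes $f_{0011}$ directly by conditioning on the leftmost repeated letter, and gets $f_{0112}$ by identifying $B_n(0112)$ with the set partitions in $P_n(0112,0121)$ already enumerated in \cite{JMS}.) So the $0112$ half of your plan is sound, modulo actually writing down the series.

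The genuine gap is in your structural characterization of $B(0011)$, which is exactly the point you flag as delicate. Avoidance does force at most one positive value to be repeated, but the condition on the zeros is not ``at most one further $0$'' before the second-from-last copy of the repeated value $v$: if the last two copies of $v$ sit at positions $p_{k-1}<p_k$, then any two $0$'s before $p_{k-1}$ together with these two copies form an occurrence of $0011$, so the correct condition is that \emph{no} $0$ other than $\pi_1$ may precede $p_{k-1}$. The sequence $0011$ itself is a counterexample to your criterion: it has one repeated positive value and exactly one ``further'' $0$ before the second-from-last copy of $1$, yet it obviously contains the pattern; your condition would give $b_4(0011)=14$ rather than the correct $13=(4-1)2^{2}+1$. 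Since you never carry out either generating-function computation and the proposed bijection $B_n(0011)\to B_n(0112)$ is only gestured at (with the two direct computations named as the fallback), the proof of the theorem rests on a case analysis that starts from a false premise. A further unanticipated subtlety in the $0011$ case is the tail after the repeated value: the returns $v0^a v$ create extra ascents, so a later letter may skip one level (the paper's ``skipped string $q0^a$''), and this must be accounted for to land on the numerator $1-4x+5x^2-x^3$.
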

\begin{proof}
We first find $g=f_{0011}$.  Clearly, members $\pi \in B(0011)$ not containing a repeated letter are enumerated by $\frac{1}{1-x}$, so assume $\pi$ contains a repeated letter.  We consider cases based on the leftmost repeated letter of $\pi$. If $\pi$ starts with two or more $0$'s, then we must have $\pi=0^r10^{u_1}\cdots k0^{u_k}$ for some $k \geq 0$, where $r \geq 2$ and $u_i \geq0$ for all $i \in [k]$. This is seen to yield a contribution towards $g$ of $\frac{x^2}{1-2x}$. Now suppose we have $\pi=01\cdots k0^s\rho$ (*) for some $k,s \geq 1$, where $\rho$ is empty or expressible as $\rho=(k+1)0^{v_1}(k+2)0^{v_2}\cdots$ or $\rho=k0^{v_0}(k+1)0^{v_1}\cdots$, with $v_i\geq0$ for all $i\geq0$. Further, if $\rho$ starts with $k$, then a string $q0^{v_{q-k}}$ for some $q>k$ may be skipped (i.e., there is a single occurrence of consecutive letters in $\rho$ of the form $(q-1)0^a(q+1)0^b$).  This implies $\pi$ of the form (*) are enumerated by
$$\frac{x^3}{(1-x)^2}\left(1+\frac{x}{1-2x}+\frac{x}{1-2x}\cdot\frac{1-x}{1-2x}\right)=\frac{x^3}{(1-2x)^2}.$$

So assume now that the leftmost repeated letter is positive.  If $\pi=01\cdots(k-1)k^r0^s\rho$, where $r \geq 2$, $k,s \geq 1$ and $\rho$ is as before, then one gets the same contribution as from $\pi$ of the form (*) above but multiplied by $\frac{x}{1-x}$ (to account for the additional $r-1$ letters $k$).  This then yields a contribution towards $g$ of $\frac{x^4}{(1-x)(1-2x)^2}$.  Finally, if $\pi$ is of the form $\pi=01\cdots(k-1)k^r\sigma$, where $r \geq 2$ and $\sigma$ is empty or of the form $(k+1)0^{v_1}(k+2)0^{v_2}\cdots$, then one gets a contribution of $\frac{x^3}{(1-x)^2}\left(1+\frac{x}{1-2x}\right)=\frac{x^3}{(1-x)(1-2x)}$. Combining each of the prior cases for $\pi$ yields
\begin{align*}
g&=\frac{1}{1-x}+\frac{x^2}{1-2x}+\frac{x^3}{(1-2x)^2}+\frac{x^4}{(1-x)(1-2x)^2}+\frac{x^3}{(1-x)(1-2x)}=\frac{1-4x+5x^2-x^3}{(1-x)(1-2x)^2},
\end{align*}
as desired.

To determine $f_{0112}$, one can proceed  by considering cases based on the leftmost repeated letter in members of $B(0112)$ to establish an equation for the generating function or make the following observation.
By \cite[Lemma 2.4]{DS}, since $0112$ is contained in the larger pattern $01012$, we have that members of $B_n(0112)$ are synonymous with those in $P_n(0112,0121)$.  The latter pattern pair has already been enumerated and found to have generating function $\frac{1-4x+5x^2-x^3}{(1-x)(1-2x)^2}$; see first Wilf-equivalence class in \cite[Section~4.6]{JMS}.
\end{proof}

Let $w$ be a nonempty nondecreasing integral sequence  with smallest and largest letters $a$ and $b$, respectively.  Then, by the number of \emph{jumps} of $w$, we mean the number of letters in $[a,b]$ not occurring in the range of $w$.  Let $\text{jum}(w)$ denote the number of jumps of $w$ and hence $\text{jum}(w)=b-a-|\text{range}(w)|+1$.  Given $r \geq0$, let $h_r=h_r(x)$ denote the generating function that enumerates nonempty nondecreasing sequences starting with 0 and containing at most $r$ jumps according to the length (marked by $x$).

We will make use of the following formula for $h_r$ in the subsequent proofs of the remaining cases.

\begin{lemma}\label{1000lem}
If $r \geq0$, then $h_r(x)=\frac{x}{1-2x}\left(\frac{x}{1-x}\right)^r$.
\end{lemma}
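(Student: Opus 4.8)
The plan is to prove the closed form by induction on $r$, exploiting the fact that the proposed expression factors as $\frac{x}{1-2x}\left(\frac{x}{1-x}\right)^r$, namely as the $r=0$ term multiplied by $r$ copies of $\frac{x}{1-x}$. Accordingly, I would first settle the base case $r=0$ directly and then establish the recursion $h_r=\frac{x}{1-x}\,h_{r-1}$ for $r\geq1$ by a structural decomposition of the sequences being counted, after which iterating the recursion gives the result. (Equivalently, one may argue in a single step by decomposing each counted sequence as a jump-free initial segment followed by $r$ further runs, producing the product directly; I find the inductive phrasing cleaner to verify.)

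For the base case, observe that a nondecreasing sequence starting with $0$ has no jumps precisely when its set of values is an initial interval $\{0,1,\ldots,b\}$, that is, when it is a staircase-type word $0^{m_0}1^{m_1}\cdots b^{m_b}$ with every $m_i\geq1$. For a fixed length $n\geq1$, specifying the run lengths amounts to writing $n$ as an ordered sum of positive parts, so there are $\sum_{b\geq0}\binom{n-1}{b}=2^{n-1}$ such sequences. Hence $h_0(x)=\sum_{n\geq1}2^{n-1}x^n=\frac{x}{1-2x}$, which is exactly the claimed formula at $r=0$.

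For the inductive step I would peel off, from a sequence counted by $h_r$, its maximal terminal run together with the final jump leading into it. Removing this block decreases the jump count by one and leaves a nonempty nondecreasing sequence starting with $0$ with one fewer jump, hence an object enumerated by $h_{r-1}$; the removed block is an arbitrary nonempty constant run and so contributes the factor $\frac{x}{1-x}=x+x^2+\cdots$. Conversely, appending such a run at the appropriate value (so as to create exactly one new jump) to any sequence counted by $h_{r-1}$ reintroduces a single jump. This correspondence is a length-respecting bijection, giving $h_r=\frac{x}{1-x}\,h_{r-1}$; combined with the base case and applied $r$ times, it yields $h_r(x)=\frac{x}{1-2x}\left(\frac{x}{1-x}\right)^r$.

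The step requiring the most care is the inductive decomposition: one must check that peeling the terminal run changes the jump total by exactly one, that the removed block really ranges over all nonempty constant runs so that its contribution is precisely $\frac{x}{1-x}$, and that every sequence counted by $h_r$ arises uniquely in this way (so the map is a genuine bijection). Once this bookkeeping is pinned down the closed form is immediate. As an independent sanity check on the algebra, the low-order coefficients of $\frac{x}{1-2x}\left(\frac{x}{1-x}\right)^r$—for instance the values $2^{n-1}-1$ obtained when $r=1$—can be matched against a direct enumeration of the corresponding sequences.
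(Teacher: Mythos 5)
Your base case is correct, but the inductive step fails, and the identity it is meant to establish is in fact false as written. The recursion $h_r=\frac{x}{1-x}\,h_{r-1}$ cannot hold: $h_r$ enumerates sequences with \emph{at most} $r$ jumps, so everything counted by $h_{r-1}$ is also counted by $h_r$ and $h_r-h_{r-1}$ must have nonnegative coefficients, whereas your recursion forces $[x^1]h_r=0$ for $r\geq1$ even though the one-letter sequence $0$ has zero jumps and is counted by every $h_r$. Concretely, $[x^2]h_1=3$ (the sequences $00$, $01$, $02$), while $\frac{x}{1-x}h_0=\frac{x^2}{(1-x)(1-2x)}$ gives $1$; the sanity check you defer to the end ($2^{n-1}-1$ coefficients for $r=1$) would have exposed this immediately. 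The peeling step breaks down for two reasons: (a) a sequence counted by $h_r$ need not have any jump at all, let alone one immediately before its terminal run, so the decomposition does not apply to all objects being counted; and (b) with the paper's definition $\mathrm{jum}(w)=b-a-|\mathrm{range}(w)|+1$, a single gap $w_{j+1}=w_j+i$ with $i\geq2$ contributes $i-1$ jumps, so removing the last run can change the jump count by more than one. The paper's first proof instead conditions on the last letter and obtains the correct recurrence $h_r=x+2xh_r+x(h_0+\cdots+h_{r-1})$, which it then solves via a bivariate generating function.

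You should also know that the displayed formula in the lemma statement is a typo in the paper itself: both of the paper's proofs, and every subsequent application of the lemma (e.g., in the proofs of Theorems \ref{1010th} and \ref{1200th}), give
\[
h_r(x)=\frac{x}{1-2x}\left(\frac{1-x}{1-2x}\right)^r,
\]
which is consistent with $[x^2]h_1=3$ and with the later identity $h_j^*=h_j-h_{j-1}=\frac{x^2(1-x)^{j-1}}{(1-2x)^{j+1}}$. Your instinct that the answer should have a product structure is sound and matches the paper's second (bijective) proof, but the correct extra factor per unit of allowed jump is $1+\frac{x}{1-2x}=\frac{1-x}{1-2x}$, corresponding to an \emph{optional} staircase block, not $\frac{x}{1-x}$, a mandatory constant run.
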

\begin{proof}
It is instructive for the arguments that follow to provide two proofs of this result.  For the first proof, suppose $w$ is a sequence of length two or more enumerated by $h_r$ where $r \geq 1$ and whose last two letters are $q,p$ for some $0 \leq q \leq p$.  If $p=q$ or $p=q+1$, then the addition of the terminal $p$ does not introduce a jump, and hence the contribution of such $w$ towards the generating function $h_r$ is given by $2xh_r$.  On the other hand, if $q=p+i$ for some $2 \leq i \leq r+1$ and $w=w'p$, then $\text{jum}(w)=\text{jum}(w')+i-1$.  Note $w'$ must then have at most $r-i+1$ jumps, and hence is enumerated by $h_{r-i+1}$.  Thus, considering all possible $i$ yields a contribution towards $h_r$ of $x(h_0+\cdots+h_{r-1})$.  Taking into account the single word of length one enumerated by $h_r$, we obtain from the prior cases the recurrence
\begin{equation}\label{1000leme1}
h_r=x+2xh_r+x(h_0+\cdots+h_{r-1}), \qquad r \geq 1.
\end{equation}
Note that \eqref{1000leme1} also holds when $r=0$, since $h_0=\frac{x}{1-2x}$ as it enumerates members of $\mathcal{T}_0$ according to length.

Let $H(x,y)=\sum_{r\geq0}h_r(x)y^r$.  Multiplying both sides of \eqref{1000leme1} by $y^r$, and summing over all $r \geq0$, implies
\begin{align*}
(1-2x)H(x,y)&=\frac{x}{1-y}+x\sum_{r\geq1}y^r\sum_{i=0}^{r-1}h_i=\frac{x}{1-y}+x\sum_{i\geq0}h_i\sum_{r \geq i+1}y^r=\frac{x}{1-y}+\frac{xy}{1-y}H(x,y).
\end{align*}
Solving for $H(x,y)$ in the last equation gives
$$H(x,y)=\frac{x}{1-2x-y+xy}=\frac{\frac{x}{1-2x}}{1-\frac{(1-x)y}{1-2x}}=\frac{x}{1-2x}\sum_{r \geq0}\left(\frac{1-x}{1-2x}\right)^ry^r,$$
and extracting the coefficient of $y^r$ yields the stated formula for $h_r$.

Our second proof as follows is more enumerative.  Let $\mathcal{A}_n^{(r)}$ for $n \geq 1$ and $r \geq0$ denote the set of $(r+1)$-tuples of words $(w^{(0)},w^{(1)},\ldots,w^{(r)})$ in which $w^{(0)} \in \mathcal{T}_0$ and $w^{(i)} \in \mathcal{T}_0\cup\{\varepsilon\}$ for $1 \leq i \leq r$ such that $\sum_{i=0}^r|w^{(i)}|=n$.
Then it is seen that the coefficient of $x^n$ in $h_r(x)$ gives the cardinality of $\mathcal{A}_n^{(r)}$ for all $n \geq 1$.

Let $\mathcal{B}_n^{(r)}$ denote the set of nondecreasing integral sequences of length $n$ starting with 0 and containing at most $r$ jumps. To complete our second proof, it suffices to define a bijection between  $\mathcal{A}_n^{(r)}$ and  $\mathcal{B}_n^{(r)}$. Let $\lambda=(w^{(0)},\ldots,w^{(r)}) \in \mathcal{A}_n^{(r)}$ and let $1 \leq i_1<\cdots<i_k \leq r$ denote the complete set of indices $i \in [r]$ such that $w^{(i)}$ is nonempty.  Given a word $w$ and a positive integer $m$, let $w+m$ denote the sequence obtained by adding $m$ to each entry of $w$.  Let $\lambda^{(0)}=w^{(0)}$, $\lambda^{(1)}=w^{(i_1)}+m_0+i_1+1$ and $\lambda^{(j)}=w^{(i_j)}+i_j-i_{j-1}+m_{j-1}+1$ for $2 \leq j \leq k$, where $m_j=\max(\lambda^{(j)})$ for $0 \leq j \leq k$ and it is assumed that the $\lambda^{(j)}$ are formed in the natural order starting with $\lambda^{(0)}$. We concatenate the various sequences $\lambda^{(j)}$ and define $\lambda^*=\lambda^{(0)}\lambda^{(1)}\cdots\lambda^{(k)}$. Note that there are $i_j-i_{j-1}$ jumps between $\lambda^{(j)}$ and $\lambda^{(j-1)}$ for $j \in [k]$, where $i_0=0$.  Hence, $\lambda^*$ has $i_1+\sum_{j=2}^k(i_j-i_{j-1})=i_k$ jumps altogether if $k \geq 1$, with no jumps in $\lambda^*$ if no such indices $i_j$ exist.  Thus, $\lambda^*$ contains at most $r$ jumps and hence belongs to $\mathcal{B}_n^{(r)}$.  One may verify that the mapping $\lambda \mapsto \lambda^*$ provides the desired bijection between  $\mathcal{A}_n^{(r)}$ and  $\mathcal{B}_n^{(r)}$.
\end{proof}

\begin{theorem}\label{1000th}
If $\tau=1000$ or $1100$, then
\begin{equation}\label{1000the1}
f_\tau(x)=\frac{(1-x)(1-8x+24x^2-31x^3+13x^4+2x^5+2x^6)}{(1-2x)^5}.
\end{equation}
\end{theorem}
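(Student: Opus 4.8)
The plan is to first determine $f_{1000}$ by a direct generating function decomposition built on Lemma \ref{1000lem}, and then to obtain the same formula for $\tau=1100$, either by an analogous decomposition or by an explicit bijection between $B_n(1000)$ and $B_n(1100)$. The first step is to convert each pattern condition into a transparent structural one, exploiting that $\pi\in B(\tau)$ is $021$-avoiding, so its positive letters form a nondecreasing subsequence. In an occurrence of $1000$, the three equal smaller letters would otherwise lie to the right of a strictly larger positive letter, contradicting the nondecreasing positive subsequence; hence those three letters must all be $0$. Thus, within $B_n$, avoiding $1000$ is equivalent to requiring that \emph{at most two $0$'s occur to the right of the leftmost positive letter}. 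The analogous reduction for $1100$ shows that avoiding it is equivalent to requiring at most one $0$ to the right of the second occurrence of the smallest repeated positive value.

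I would then decompose a member $\pi\in B(1000)$ containing at least one positive letter as $\pi=0^{s}\beta$, where $s\geq1$ and $\beta$ begins with a positive letter. The leading run $0^{s}$ is unconstrained and contributes $\frac{x}{1-x}$, while $\beta$ contains at most two $0$'s by the reduction above. The positive subsequence of $\beta$ is nondecreasing with first letter $1$ (forced, since the prefix $0^{s}$ has no ascents). Writing the positive letters as $1=p_1\leq p_2\leq\cdots$ with zeros inserted into the gaps, the ascent-sequence condition becomes, for each positive letter,
\[
p_j\leq 2+\sum_{i=1}^{j-2}c_i,\qquad c_i=[\,z_i\geq1\ \text{or}\ p_i<p_{i+1}\,],
\]
where $z_i$ counts the zeros in the $i$-th interior gap. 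The key observation is that each $0$ placed strictly between two positive letters supplies exactly one extra ascent, and this additional budget is precisely what permits the positive subsequence to contain a jump. Quantifying this, the number of jumps of the positive subsequence is governed by the number of interior $0$'s, so that the positive part associated with a given placement of the (at most two) interior zeros is enumerated by an appropriate $h_r$ from Lemma \ref{1000lem}.

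Next I would organize the computation as a finite sum over the number ($0$, $1$, or $2$) and type (interior versus trailing) of zeros in $\beta$, multiplying in each case the factor $\frac{x}{1-x}$ for the leading zeros, a factor $x$ for each inserted zero, and the relevant $h_r=\frac{x}{1-2x}\bigl(\frac{x}{1-x}\bigr)^r$ for the nondecreasing positive part(s). Adding the contribution $\frac{1}{1-x}$ from the all-zero sequences (including $\varepsilon$) and simplifying should collapse the sum to the stated rational function $\frac{(1-x)(1-8x+24x^2-31x^3+13x^4+2x^5+2x^6)}{(1-2x)^5}$. For $\tau=1100$ I would then establish $f_{1100}=f_{1000}$ by exhibiting a bijection $B_n(1000)\leftrightarrow B_n(1100)$ reconciling the two structural conditions — trading the zeros permitted to the right of the first positive letter against the repetition of a positive value together with the single trailing zero allowed after it — or, failing a clean bijection, by repeating the decomposition above under the $1100$-condition, in the spirit of Theorems \ref{0010th} and \ref{0011th}.

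The main obstacle I anticipate lies in the middle steps. The endpoint inequality alone only bounds the number of jumps and is not tight: for instance $1,3,3$ satisfies the $j=k$ constraint yet is ruled out by the intermediate constraint at $j=2$. Hence one must argue that \emph{all} of the inequalities $p_j\leq 2+\sum_{i=1}^{j-2}c_i$ are captured \emph{exactly}, not merely bounded, by the $h_r$ count, and then handle the several placement cases for the interior and trailing zeros without double counting, so that the $h_r$ factors assemble correctly into the denominator $(1-2x)^5$. Producing an explicit bijection that reconciles the differing $1000$- and $1100$-conditions is the other delicate point.
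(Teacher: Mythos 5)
Your overall strategy coincides with the paper's: the structural reductions (in a $021$-avoider any occurrence of $1000$ forces the three equal letters to be $0$, so avoidance means at most two $0$'s beyond the initial run; analogously for $1100$) are exactly the observations the paper starts from, and the paper likewise computes $f_{1000}$ by splitting on the runs of $0$ and invoking Lemma \ref{1000lem}, then transfers the result to $1100$ by a bijection. The difficulty is that the two points you flag as ``obstacles'' are not loose ends to be tidied later --- they are the substance of the proof, and your proposal resolves neither. On the jump accounting: you are right that a global bound on the number of jumps is not the correct condition, since a jump can only be taken after the extra ascent paying for it has been earned (e.g.\ $01303$ is illegal even though the total budget would allow one jump). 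The paper handles this by cutting $\pi$ at the interior zeros and assigning each nondecreasing segment its own exact jump budget, determined by whether that segment starts at the current maximum, one above it, or two above it; this is precisely where the factor $\lambda=\left(\frac{1-x}{1-2x}\right)^2$ and the specific $h_2$ enter (in the three-runs-of-zero case the terminal segment $\beta$ is counted by $h_2$ only when the middle segment $\alpha$ lies in $\mathcal{T}_k$ and $\beta$ starts at the last letter of $\alpha$, and by $\lambda$ otherwise). The saving fact, which you would need to state and justify, is that \emph{within} a single nondecreasing segment the constraint really is memoryless --- once the budget $r$ is fixed at the segment's start, every placement of at most $r$ jumps is legal --- and that is what licenses the use of $h_r$. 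Without this segment-by-segment bookkeeping the cases do not assemble into the denominator $(1-2x)^5$.

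The second gap is the $1100$ case. The bijection $B_n(1100)\to B_n(1000)$ does not fall out of the observation that the two conditions ``trade zeros against repetitions'': the paper's map locates the smallest repeated positive letter $j$, writes $\pi=0^{a_0}10^{a_1-1}\cdots j0^{a_j-1}j^s\pi'$, and then splits into roughly half a dozen cases according to whether $a_j=1$ or $a_j\geq 2$, whether $s=1$ or $s\geq2$, and whether and where a $0$ occurs in $\pi'$, with a different rearrangement of letters in each case and a separate verification that the image is an ascent sequence and that the map is reversible. Alternatively a direct computation of $f_{1100}$ by the same segment decomposition is feasible (the second occurrence of the smallest repeated positive letter plays the role that the second run of $0$ plays for $1000$), but that too must actually be carried out; as written, $f_{1100}=f_{1000}$ is asserted rather than proved.
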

\begin{proof}
To find $g=f_{1000}$, first note that $\pi \in B(1000)$ can contain at most three runs of 0, with at most two 0's occurring beyond the initial run.  If $\pi$ is empty or contains only one run of 0, then $\pi$ avoiding 021 implies it is weakly increasing and hence such $\pi$ are enumerated by $\frac{1-x}{1-2x}$.  If $\pi$ contains exactly two runs of 0, then we have the decomposition
$$\pi=0^{a_0}1^{a_1}\cdots k^{a_k}\delta\pi', \qquad k \geq 1,$$
where $\delta=0$ or $00$, all exponents are positive and $\pi'$ is nondecreasing and starts with $k$ or $k+1$ if nonempty.  If $\pi'$ starts with $k+1$, then it is a staircase word, whereas if $\pi'$ starts with $k$, then it may contain a jump.  Thus, the section $\pi'$ is accounted for by
$$\lambda:=1+\frac{x}{1-2x}+\frac{x}{1-2x}\cdot\frac{1-x}{1-2x}=\left(\frac{1-x}{1-2x}\right)^2,$$
upon considering whether $\pi'$ starts with $k+1$ or $k$.  Since $k \geq 1$, the section $0^{a_0}\cdots k^{a_k}$ of $\pi$ is accounted for by $\frac{x^2}{(1-x)(1-2x)}$.  Thus, by the decomposition above, the $\pi$ in $B(1000)$ containing exactly two runs of 0 make a contribution of $\frac{x^3(1+x)}{(1-x)(1-2x)}\lambda$ towards the generating function $g$.

Now suppose $\pi$ has three runs of 0 and hence $\pi=0^{a_0}\cdots k^{a_k}0\pi'$, where $\pi'$ now contains one zero and does not start with zero.  Let $\pi'=\alpha0\beta$, where $\alpha$ is nonempty and $\beta$ is possibly empty.  If (i) $\alpha \in \mathcal{T}_{k+1}$ or if (ii) $\alpha$ starts with $k$ and contains a jump, then $\beta$ is seen to be accounted for by $\lambda$ in either case.  This implies $\pi'$ for which $\alpha$ satisfies (i) or (ii) is accounted for by $\frac{x^2}{1-2x}\lambda+\frac{x^3}{(1-2x)^2}\lambda=\frac{x^2(1-x)}{(1-2x)^2}\lambda$, and hence the corresponding $\pi$ in $B(1000)$ are enumerated by $\frac{x^5}{(1-2x)^3}\lambda$, upon considering the contribution from the section $0^{a_0}\cdots k^{a_k}0$.

So assume $\alpha \in \mathcal{T}_k$. If the last letter of $\alpha$ is $p$ for some $p \geq k$, then $\beta$ may start with $p$, $p+1$ or $p+2$ if nonempty.  If $\beta$ is empty or starts with $p+1$ or $p+2$, then it is seen that $\beta$ is accounted for by $\lambda$, whereas if $\beta$ starts with $p$, then it is accounted for by $h_2(x)$ whose formula is given in Lemma \ref{1000lem}, as $\beta$ may contain up to two jumps in this case.  This implies $\pi'$ contributes a factor of $\frac{x^2}{1-2x}(\lambda+h_2(x))$, and hence the corresponding $\pi$ in this case are enumerated by $\frac{x^5}{(1-x)(1-2x)^2}(\lambda+h_2(x))$. Combining all of the prior cases implies
\begin{align*}
g&=\frac{1-x}{1-2x}+\frac{x^3(1+x)}{(1-x)(1-2x)}\lambda+\frac{x^5}{(1-2x)^3}\lambda+\frac{x^5}{(1-x)(1-2x)^2}(\lambda+h_2(x))\\
&=\frac{1-x}{1-2x}+\frac{x^3(1-x)}{(1-2x)^3}\left(1+x+\frac{x^2(1-x)}{(1-2x)^2}+\frac{x^2}{1-2x}\right)+\frac{x^6(1-x)}{(1-2x)^5}\\
&=\frac{(1-x)(1-8x+24x^2-31x^3+13x^4+2x^5+2x^6)}{(1-2x)^5},
\end{align*}
as desired.

To complete the proof of \eqref{1000the1}, we define a bijection $f$ between $B_n(1100)$ and $B_n(1000)$ as follows.  Let $\pi \in B_n(1100)$. If $\pi$ does not contain a repeated positive letter, then we must have $\pi=0^{a_0}10^{a_1-1}20^{a_2-1}\cdots k0^{a_k-1}$ for some $k \geq0$, where $a_i>0$ for all $i$, in which case, we let $f(\pi)=0^{a_0}1^{a_1}\cdots k^{a_k}$.  Otherwise, let $j$ denote the smallest repeated positive letter of $\pi$, whence
$\pi=0^{a_0}10^{a_1-1}\cdots j0^{a_{j}-1}j^s\pi'$, where $s \geq 1$ and $\pi'$ does not start with $j$ and contains at most one zero but is otherwise nondecreasing and possibly empty.

First assume $\pi'$ does not contain 0.  Note that if $a_j\geq 2$, then $\pi'$ is either empty, a member of $\mathcal{T}_{j+2}$ or a weakly increasing sequence starting with $j+1$ and containing at most one jump, whereas if $a_j=1$, then $\pi'$ is either empty or a member of $\mathcal{T}_{j+1}$. We then define $f$ based on cases concerning the values of $a_j$ and $s$ as follows:
$$
f(\pi) = \begin{cases}
0^{a_0}1^{a_1}\cdots(j-1)^{a_{j-1}}j^{a_j-1}00j^{s-1}\pi', & \text{if } a_j,s\geq 2; \\
0^{a_0}1^{a_1}\cdots(j-1)^{a_{j-1}}j0j^{a_j-1}\pi', & \text{if }a_j\geq2,\,s=1; \\
0^{a_0}1^{a_1}\cdots(j-1)^{a_{j-1}}j^{s-1}00\pi', & \text{if } a_j=1,\,s\geq 2;\\
0^{a_0}1^{a_1}\cdots(j-1)^{a_{j-1}}j0\pi',  & \text{if } a_j=s=1.
\end{cases}
$$

So assume $\pi'$ contains 0 and we write $$\pi=0^{a_0}10^{a_1-1}\cdots j0^{a_j-1}j^s\sigma0\rho, \quad(*)$$ where $\sigma$ or $\rho$ may be empty and $\sigma$ does not start with $j$.  Note that $\sigma$ if nonempty is either a nondecreasing sequence starting with $j+1$ and containing at most one jump or a member of $\mathcal{T}_{j+2}$,  with $\sigma$ containing a jump or belonging to $\mathcal{T}_{j+2}$ only possible if $a_j\geq 2$.  Further, if $\sigma$ is empty, then $\rho$ can start with $j$, $j+1$ or $j+2$ if nonempty (the $j+2$ option only possible if $a_j \geq 2$).  We define $f$ for $\pi$ of the form (*) as follows.  If it is \emph{not} the case that both $a_j=1$ and $\sigma=\varnothing$,
then let
$$f(\pi)=0^{a_0}1^{a_1}\cdots(j-1)^{a_{j-1}}j^s0j^{a_j-1}\sigma0\rho.$$
If $a_j=1$ and $\sigma=\varnothing$, let
$$f(\pi)=0^{a_0}1^{a_1}\cdots(j-1)^{a_{j-1}}j^{s+1}0\rho.$$

One may verify in each of the last two cases that $f(\pi)$ is indeed an ascent sequence (one that avoids $1000$), and hence $f$ is well-defined.  Further, one can show that $f$ defined in each of the cases above is reversible, with exactly one case of $f$ (or its inverse) applying to any given member of $B_n(1100)$ or $B_n(1000)$.  Thus, $f$ provides the desired bijection between $B_n(1100)$ and $B_n(1000)$, which completes the proof.
\end{proof}

We now consider the pattern $1001$ for which we will make use of an auxiliary parameter as follows.  Given a $021$-avoiding ascent sequence $\pi$, let $\text{pjum}(\pi)=M-m$, where $M=\text{asc}(\pi)$ and $m=\max(\pi)$; note $M \geq m$ for all ascent sequences $\pi$.  Since $\text{pjum}(\pi)$ gives the cardinality of the set $[m+1,M]$, it corresponds to the maximum potential number of jumps (within the subsequence consisting of the positive entries of $\pi$) created when a nonzero letter $s$ is appended to $\pi$ (which occurs with $s=M+1$).  Given $i \geq0$, let $f_i=f_i(x)$ denote the generating function enumerating nonempty members $\pi \in B(1001)$ for which $\text{pjum}(\pi)=i$ according to their length.

We have the following relations involving the generating functions $f_i$.

\begin{lemma}\label{1001lem1}
If $i \geq 1$, then
\begin{equation}\label{1001lem1e1}
f_i=\frac{x^3}{(1-x)^3}f_{i-1}+\frac{x(1-x+x^2)}{(1-x)^3}\sum_{j\geq i}f_j,
\end{equation}
with $f_0=\frac{x}{1-x}+\frac{x}{(1-x)^2}\sum_{j\geq0}f_j$.
\end{lemma}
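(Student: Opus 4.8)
The plan is to derive the two stated relations by a careful decomposition of a nonempty $1001$-avoiding, $021$-avoiding ascent sequence $\pi$ according to how its entries relate to its eventual structure, tracking the parameter $\text{pjum}$ throughout. The key structural fact I will exploit is the characterization of $B(1001)$ combined with the $021$-avoidance condition: recall that $\pi \in B_n$ iff its subsequence of positive letters is nondecreasing. Thus writing $\pi$ in terms of its runs, once a positive letter appears the positive part climbs weakly; the pattern $1001$ forbids, after a positive letter $j$ occurs and is followed (not necessarily immediately) by a strictly smaller value (a descent back down, which for $021$-avoiders means a return to $0$), a subsequent reoccurrence of a value $\geq j$ equal on both ends. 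Concretely, $1001$-avoidance restricts how many times a positive value may be ``revisited'' after a drop to $0$. I would first pin down exactly which appendings of a new final letter preserve both $021$- and $1001$-avoidance, and how each such appending changes $M=\text{asc}(\pi)$ and $m=\max(\pi)$, hence $\text{pjum}$.

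First I would establish the base case $f_0=\frac{x}{1-x}+\frac{x}{(1-x)^2}\sum_{j\geq0}f_j$. Here $\text{pjum}(\pi)=0$ means $M=m$, i.e.\ every ascent of $\pi$ raises the maximum, so no ``potential jump slots'' remain. The term $\frac{x}{1-x}$ should account for the weakly increasing $021$-avoiders with $\text{pjum}=0$ (sequences $0^{a_0}1^{a_1}\cdots$ with consecutive letters and no gaps), enumerated as a geometric-type contribution. The second term $\frac{x}{(1-x)^2}\sum_{j\geq0}f_j$ should arise by taking an \emph{arbitrary} nonempty member $\sigma$ of $B(1001)$ (summed over all its $\text{pjum}$-values $j$, giving $\sum_{j\geq0}f_j$) and prepending/appending a controlled block of entries that resets the potential-jump count to $0$ while contributing the factor $\frac{x}{(1-x)^2}$; I expect this corresponds to attaching a run structure (a new letter together with repetitions) in exactly the way that collapses $M-m$ back to zero. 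I would verify this by checking the allowed final-letter operations against the $\text{pjum}=0$ constraint.

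For the recurrence \eqref{1001lem1e1} with $i\geq1$, I would peel off the last structural block of a $\pi$ with $\text{pjum}(\pi)=i$. The first term $\frac{x^3}{(1-x)^3}f_{i-1}$ should correspond to the operation that \emph{increases} the potential-jump parameter by exactly one: appending a block that creates one new ascent without raising the maximum correspondingly, applied to a sequence with $\text{pjum}=i-1$; the factor $\frac{x^3}{(1-x)^3}$ suggests three ``seed'' letters each with an independent run (each run contributing a $\frac{x}{1-x}$ geometric factor). The second term $\frac{x(1-x+x^2)}{(1-x)^3}\sum_{j\geq i}f_j$ should capture operations that append a letter which uses up one of the available jump slots (requiring the source sequence to have $\text{pjum}=j\geq i$, since at least $i$ slots must be present), with the polynomial $1-x+x^2$ encoding a small finite choice (empty/one/two of something) generated by $\frac{x}{1-x}$ run factors; I would identify this finite choice precisely by examining which of the letters $j,j+1,j+2$ (or repetitions thereof) may be appended without creating a $1001$ pattern after a descent to $0$.

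The main obstacle will be rigorously justifying that the decomposition is a genuine bijection onto $B(1001)$ with the claimed $\text{pjum}$-bookkeeping —that is, showing that every appending operation I allow preserves $1001$-avoidance and that the cases are mutually exclusive and exhaustive, with the $\text{pjum}$ value changing exactly as asserted. In particular, the subtle point is the interaction between dropping to $0$ and re-ascending: after a $0$-run, the set of admissible next positive letters is constrained both by the ascent-sequence condition (bounded by $\text{asc}+1$) and by $1001$-avoidance (cannot repeat a positive value that already sat above a prior $0$), and I must confirm that ``at least $i$ jumps available'' is the exact gate controlling which $f_j$ with $j\geq i$ feed into term two. I would resolve this by fixing a canonical reading of $\pi$ from the right, identifying the unique last maximal block, and checking the four or five micro-cases against the defining forbidden pattern; the generating-function identities then follow by summing the run-length geometric series, which is routine once the combinatorial decomposition is secured.
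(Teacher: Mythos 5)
Your plan has the right overall flavor (peel off the last block, track how $\mathrm{pjum}$ changes), but it stops short of the one idea that actually makes the lemma provable, and two of your concrete guesses are wrong. The missing idea is the precise structural decomposition of a member of $B(1001)$: since $021$-avoidance forces the positive letters to be nondecreasing, any occurrence of $1001$ must have its two middle letters equal to $0$, so $1001$-avoidance says exactly that no positive value occurs both before and after two $0$'s. Hence $\pi=\pi^{(0)}\pi^{(1)}\cdots\pi^{(k)}$, where $\pi^{(0)}$ is a nonempty run of $0$'s and each subsequent \emph{unit} has the form $a_j^{\alpha_j}0a_j^{\beta_j}0^{\gamma_j}$ or $a_j^{\alpha_j}0^{\gamma_j}$ with $1=a_1<\cdots<a_k$. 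The recurrence is then obtained by deleting the final unit, using the bookkeeping: if $\mathrm{pjum}(\pi')=j$ and $m=\max(\pi')$, appending a unit with top letter $s\in[m+1,m+j+1]$ gives $\mathrm{pjum}=m+j+1-s$ for the shape $s^a0^b$ and $m+j+2-s$ for the shape $s^u0s^v0^w$ (the extra ascent $0s$ raises $\mathrm{asc}$ without raising $\max$), so the \emph{only} way to increase $\mathrm{pjum}$ is $s=m+1$ with the second shape, and in every case the target value $i$ determines $s$ uniquely from $j$ and $m$. Without stating and justifying this decomposition you cannot carry out the case analysis you defer to the end, which is the entire content of the lemma.

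Two of your identifications would lead you astray even if you pursued the plan. First, in the formula for $f_0$ the term $\frac{x}{1-x}$ counts the all-zero sequences $0^{a_0}$ (the single-unit case), not the staircase sequences $0^{a_0}1^{a_1}\cdots k^{a_k}$; the latter are enumerated by $\frac{x}{1-2x}$ and arise recursively from the term $\frac{x}{(1-x)^2}\sum_j f_j$ by repeatedly appending units $p^a0^b$ with $b=0$. Second, the polynomial $1-x+x^2$ does not encode a choice among candidate letters $j,j+1,j+2$: since the top letter $p$ of the final unit is forced once $i$ and $\pi'$ are fixed, the factor is simply the sum of the weights of the two admissible unit shapes, $\frac{x}{(1-x)^2}+\frac{x^3}{(1-x)^3}=\frac{x(1-x+x^2)}{(1-x)^3}$, and likewise $\frac{x^3}{(1-x)^3}$ is the weight of the single shape $t^u0t^v0^w$ (with $t=m+1$ forced) that realizes the transition from $f_{i-1}$ to $f_i$.
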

\begin{proof}
First note that a nonempty $\pi \in B(1001)$ must be of the form $\pi=\pi^{(0)}\pi^{(1)}\cdots \pi^{(k)}$ for some $k \geq 0$, where $\pi^{(0)}$ is a nonempty sequence of $0$'s and $\pi^{(j)}$ for  $j \in [k]$ is given by $\pi^{(j)}=a_j^{\alpha_j}0a_j^{\beta_j}0^{\gamma_j}$ or $a_j^{\alpha_j}0^{\gamma_j}$, with $\alpha_j,\beta_j>0$, $\gamma_j \geq0$ and $1=a_1<\cdots<a_k$.  We will refer to each section $\pi^{(i)}$ as a \emph{unit} of $\pi$.  Conversely, any $\pi$ of the stated form is seen to avoid both $021$ and $1001$, assuming the $a_i$ are such that $\pi$ is indeed an ascent sequence.

We now make some further observations.  Suppose $\text{pjum}(\omega)=i$ and we wish to append a nonzero letter $s$ to $\omega$ such that $\omega s \in B(1001)$ and $s>m$, where $m=\max(\omega)$.  Then the possible values of $s$ comprise the set $[m+1,m+i+1]$, with $\text{pjum}(\omega s)=m+i+1-s$ for each $s$. Further, if the string $s0s$ is appended to $\omega$, where $s$ is as before, then we have $\text{pjum}(\omega s0s)=m+i+2-s$, as the final ascent $0s$ increases the number of ascents by one while maintaining the maximum letter. Note that only when $s=m+1$ and $s0s$ is appended does the resulting ascent sequence have a strictly larger $\text{pjum}$ value than that of $\omega$ (in which case, $\text{pjum}(\omega s0s)=i+1$).  When only $s$ or in all other cases when $s0s$ is appended, the resulting sequence is seen to have pjum value at most $i=\text{pjum}(\omega)$.

Now suppose $\pi$ is enumerated by $f_i$ for some $i \geq1$.  Then $\pi$ must contain at least two units and removing the final unit of $\pi$ results in an ascent sequence $\pi'$ enumerated by $f_j$ for some $j \geq i-1$, by the preceding observations.  Let $r=\max(\pi')$.  If $\pi'$ is enumerated by $f_{i-1}$, then in forming $\pi$ from $\pi'$, one must append a unit of the form $t^u0t^v0^w$, where $t=r+1$, $u,v>0$ and $w \geq0$.  This yields a contribution of $\frac{x^3}{(1-x)^3}f_{i-1}$ towards $f_i$ for such $\pi$, with the $\frac{x^3}{(1-x)^3}$ factor accounting for the appended sequence $t^u0t^v0^w$ wherein the value of $t$ is determined by $\pi'$.  On the other hand, if $\pi'$ is enumerated by $f_j$ for some $j \geq i$, then the final unit of $\pi$ is given by $p^a0^b$, where $a>0$ and $b \geq0$, or by $p^u0p^v0^w$, where $u,v>0$ and $w \geq0$, with $p \geq r+1$ in either case.  Note that the value of $p$ is uniquely determined by $i$ and $\pi'$ (through the values of $r$ and $j$).    This implies such $\pi$ make a contribution towards $f_i$ of
$$\left(\frac{x}{(1-x)^2}+\frac{x^3}{(1-x)^3}\right)f_j=\frac{x(1-x+x^2)}{(1-x)^3}f_j$$
for each $j \geq i$.  Considering the contributions from all possible $j$ yields \eqref{1001lem1e1}.

To establish the formula for $f_0$, first note that $\frac{x}{1-x}$ accounts for those $\pi$ consisting of a single unit, i.e., the all-zero sequences.  So assume $\pi$ contains two or more units and $\text{pjum}(\pi)=0$, where $\pi'$ is as before with $\text{pjum}(\pi')=j$ and $\max(\pi')=r$.  In this case, we must generate $\pi$ from $\pi'$ by appending a unit of the form $p^a0^b$, where $p=r+j+1$, so as to ensure $\text{pjum}(\pi)=0$.  Again, $p$ is determined by $\pi'$, which implies a contribution towards $f_0$ of $\frac{x}{(1-x)^2}f_j$ for each $j \geq0$.  Considering all possible $j$, and combining with the first case above, implies the stated formula for $f_0$ and completes the proof.
\end{proof}

Define the bivariate generating function
$$F(x,y)=\sum_{i\geq0}f_i(x)y^i.$$
Note $f_{1001}=1+F(x,1)$, by the definitions.

Multiplying both sides of \eqref{1001lem1e1} by $y^i$, summing over all $i \geq 1$ and taking into account the formula for $f_0(x)$, we have
\begin{align*}
F(x,y)&=\frac{x}{1-x}+\frac{x}{(1-x)^2}\sum_{j\geq0}f_j+\frac{x^3}{(1-x)^3}\sum_{i\geq1}f_{i-1}y^i+\frac{x(1-x+x^2)}{(1-x)^3}\sum_{j\geq1}f_j\sum_{i=1}^jy^i\\
&=\frac{x}{1-x}+\frac{x}{(1-x)^2}F(x,1)+\frac{x^3y}{(1-x)^3}F(x,y)+\frac{x(1-x+x^2)}{(1-x)^3}\sum_{j\geq0}f_j\cdot\frac{y-y^{j+1}}{1-y}\\
&=\frac{x}{1-x}+\frac{x}{(1-x)^2}F(x,1)+\frac{x^3y}{(1-x)^3}F(x,y)+\frac{xy(1-x+x^2)}{(1-x)^3(1-y)}(F(x,1)-F(x,y)).
\end{align*}
Solving for $F(x,y)$ in the last equality yields the following functional equation.

\begin{lemma}\label{1001lem2}
We have
\begin{equation}\label{1001lem2e1}
\left(1-\frac{x^3y}{(1-x)^3}+\frac{xy(1-x+x^2)}{(1-x)^3(1-y)}\right)F(x,y)=\frac{x}{1-x}+\left(\frac{x}{(1-x)^2}+\frac{xy(1-x+x^2)}{(1-x)^3(1-y)}\right)F(x,1).
\end{equation}
\end{lemma}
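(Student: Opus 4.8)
The plan is to convert the family of scalar recurrences in Lemma~\ref{1001lem1} into a single functional equation for the catalytic generating function $F(x,y)=\sum_{i\geq0}f_i(x)y^i$. Since \eqref{1001lem1e1} holds for every $i\geq1$ while the companion formula for $f_0$ is recorded separately, the natural move is to multiply \eqref{1001lem1e1} through by $y^i$, sum over $i\geq1$, and then reinstate the $i=0$ term using the stated formula for $f_0$. The left-hand side produces $\sum_{i\geq1}f_iy^i=F(x,y)-f_0$, so once the two sums on the right are expressed through $F(x,y)$ and $F(x,1)$, everything can be gathered into a closed relation and then solved for $F(x,y)$.

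The key computational steps concern the two sums arising from the right-hand side of \eqref{1001lem1e1}. First, the shifted term contributes $\sum_{i\geq1}\frac{x^3}{(1-x)^3}f_{i-1}y^i=\frac{x^3y}{(1-x)^3}\sum_{i\geq1}f_{i-1}y^{i-1}=\frac{x^3y}{(1-x)^3}F(x,y)$, a single reindexing. The second term is a double sum $\sum_{i\geq1}\frac{x(1-x+x^2)}{(1-x)^3}\bigl(\sum_{j\geq i}f_j\bigr)y^i$; here I would swap the order of summation so that, for each fixed $j$, the inner index $i$ runs over $1\leq i\leq j$, producing the finite geometric sum $\sum_{i=1}^{j}y^i=\frac{y-y^{j+1}}{1-y}$. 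Extending the outer sum harmlessly to $j\geq0$ (the $j=0$ contribution vanishes) and splitting $\frac{y-y^{j+1}}{1-y}$ then collapses this into $\frac{y}{1-y}(F(x,1)-F(x,y))$, since $\sum_{j\geq0}f_jy^{j+1}=yF(x,y)$ while $\sum_{j\geq0}f_jy=yF(x,1)$.

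Adding back $f_0=\frac{x}{1-x}+\frac{x}{(1-x)^2}F(x,1)$ yields the explicit relation displayed just above the lemma, after which the only remaining step is bookkeeping: move the two terms proportional to $F(x,y)$, namely $\frac{x^3y}{(1-x)^3}F(x,y)$ and $-\frac{xy(1-x+x^2)}{(1-x)^3(1-y)}F(x,y)$, to the left and factor out $F(x,y)$, leaving $F(x,1)$ together with the constant $\frac{x}{1-x}$ on the right. This reproduces \eqref{1001lem2e1} directly. The only place requiring genuine care is the interchange of summation and the subsequent telescoping of the finite geometric series into the difference $F(x,1)-F(x,y)$; the rest is routine algebraic rearrangement, so I expect no substantive obstacle beyond tracking the rational prefactors $\frac{x}{1-x}$, $\frac{x}{(1-x)^2}$, and $\frac{x^3}{(1-x)^3}$ accurately throughout.
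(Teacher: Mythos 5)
Your proposal is correct and follows essentially the same route as the paper: multiply \eqref{1001lem1e1} by $y^i$, sum over $i\geq1$, reindex the shifted term to get $\frac{x^3y}{(1-x)^3}F(x,y)$, interchange summation to turn the tail sums into $\frac{y}{1-y}(F(x,1)-F(x,y))$, reinstate $f_0$, and rearrange. All the intermediate identities you cite match the computation displayed just before the lemma in the paper.
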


We can now find an explicit formula for $f_{1001}(x)$.

\begin{theorem}\label{1001th}
If $\tau\in\{1001,1011,1101\}$, then
\begin{equation}\label{1001the1}
f_\tau(x)=\frac{1-3x+3x^2-\sqrt{(1-3x+x^2)^2-4x^3(1-x)}}{2x^2}.
\end{equation}
\end{theorem}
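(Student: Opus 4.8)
The plan is to pin down $f_{1001}(x)$ by the kernel method applied to the functional equation \eqref{1001lem2e1} of Lemma~\ref{1001lem2}, and then to transfer the result to the two remaining patterns by bijection. Recall $f_{1001}=1+F(x,1)$ and that \eqref{1001lem2e1} has the standard kernel-method form $K(x,y)F(x,y)=a(x)+B(x,y)F(x,1)$, where
\[
K(x,y)=1-\frac{x^3y}{(1-x)^3}+\frac{xy(1-x+x^2)}{(1-x)^3(1-y)},\qquad a(x)=\frac{x}{1-x},
\]
and $B(x,y)=\frac{x}{(1-x)^2}+\frac{xy(1-x+x^2)}{(1-x)^3(1-y)}$.

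First I would clear the factor $1-y$ and note that $K(x,y)=0$ is equivalent to the quadratic
\[
x^3y^2+(x^3-4x^2+4x-1)\,y+(1-x)^3=0.
\]
Of its two roots I would take the branch $y=y_0(x)$ that is admissible as a formal substitution into $F(x,y)=\sum_{i\ge0}f_i(x)y^i$; a short analysis of the quadratic shows the bounded root satisfies $y_0(0)=1$. This is not an obstruction: the recurrence \eqref{1001lem1e1} forces the minimal $x$-degree of $f_i(x)$ to grow linearly in $i$ (one finds $f_i=O(x^{3i+1})$), so $\sum_{i\ge0}f_i(x)y_0(x)^i$ converges in the formal power series sense even though $y_0(0)\neq0$.

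Substituting $y=y_0$ into \eqref{1001lem2e1} annihilates the left-hand side, leaving $0=a(x)+B(x,y_0)F(x,1)$. The key simplification is to use $K(x,y_0)=0$ to replace the singular term $\frac{xy_0(1-x+x^2)}{(1-x)^3(1-y_0)}$ by $\frac{x^3y_0}{(1-x)^3}-1$, which turns $B(x,y_0)$ into the pole-free quantity $\frac{x}{(1-x)^2}+\frac{x^3y_0}{(1-x)^3}-1$. Writing $C=f_{1001}=1+F(x,1)$, the relation $F(x,1)=-a(x)/B(x,y_0)$ then expresses $\frac{x^3y_0}{(1-x)^3}$ as an explicit rational function of $x$ and $C$; feeding this back into the quadratic for $y_0$ eliminates $y_0$ and collapses everything to the single quadratic
\[
x^2C^2-(1-3x+3x^2)\,C+(1-x)^2=0.
\]
Solving it and selecting the root with $C(0)=1$ gives \eqref{1001the1}; one checks en route that $(1-3x+3x^2)^2-4x^2(1-x)^2=(1-3x+x^2)^2-4x^3(1-x)$, which reconciles the two forms of the discriminant.

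For $\tau=1011$ and $\tau=1101$ I would finish by constructing length-preserving bijections between $B_n(1001)$ and each of $B_n(1011)$, $B_n(1101)$, using that membership in $B_n$ makes the subsequence of positive letters nondecreasing, so that any occurrence of these three patterns is governed by the placement of zeros relative to repeated positive letters. I expect the main difficulty to be twofold: on the analytic side, justifying that $y_0$ is the formally admissible root and carrying the $y_0$-elimination cleanly down to the displayed quadratic; and on the combinatorial side, verifying that the two bijections are well defined and invertible across every case of the relevant decompositions.
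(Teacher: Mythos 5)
Your proposal follows essentially the same route as the paper: the kernel method applied to \eqref{1001lem2e1}, the same admissible root ($y_0(0)=1$, the other root having a pole at $x=0$), and the same key simplification of the singular term via the vanishing of the kernel; your elimination of $y_0$ to reach the quadratic $x^2C^2-(1-3x+3x^2)C+(1-x)^2=0$ is only a cosmetic variant of the paper's direct substitution of the radical expression for $\widetilde{y}$, and the two discriminants indeed agree as you check. The bijections you defer are realized in the paper exactly as you anticipate, by locally rearranging the zeros within each unit $i^a0i^b0^c$ of a member of $B_n(1001)$ (to $i^a0^bi0^c$ for $1011$ and to $i0^ai^b0^c$ for $1101$).
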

\begin{proof}
We begin by defining bijections between $B_n(1001)$ and $B_n(\tau)$ for the other two patterns $\tau$.  A bijection between $B_n(1001)$ and $B_n(1011)$ may be realized by changing each unit of $\pi$ of the form $u=i^a0i^b0^c$, where $a,b,i>0$ and $c\geq0$, to $i^a0^bi0^c$ and leaving all other units unchanged.  For a bijection with $B_n(1101)$, we instead change $u$ to $i0^ai^b0^c$ for each such unit $u$.

Thus, we need only compute $f_{1001}$.  To do so, we apply the \emph{kernel method} (see, e.g., \cite{HM}) to \eqref{1001lem2e1} and set the factor multiplying $F(x,y)$ on the left-hand side equal to zero.  This gives
\begin{equation}\label{1001the2}
x^3y^2-(1-x)(1-3x+x^2)y+(1-x)^3=0.
\end{equation}
We denote the solution in $y$ to \eqref{1001the2} by $\widetilde{y}$, where we choose the negative root (as the positive root is not seen to lead to a power series expression for $F(x,1)$).  Note
$$\widetilde{y}=\frac{(1-x)(1-3x+x^2)-(1-x)\sqrt{(1-3x+x^2)^2-4x^3(1-x)}}{2x^3}.$$
Substituting $y=\widetilde{y}$ in \eqref{1001lem2e1}, and observing $\frac{x\widetilde{y}(1-x+x^2)}{(1-x)^3(1-\widetilde{y})}=\frac{x^3\widetilde{y}}{(1-x)^3}-1,$
yields
$$\left(\frac{x}{(1-x)^2}+\frac{x^3\widetilde{y}}{(1-x)^3}-1\right)F(x,1)+\frac{x}{1-x}=0.$$
Thus, we have
\begin{align*}
F(x,1)&=\frac{-x(1-x)^2}{x(1-x)-(1-x)^3+x^3\widetilde{y}}=\frac{2x(1-x)}{1-3x+x^2+\sqrt{(1-3x+x^2)^2-4x^3(1-x)}}\\
&=\frac{1-3x+x^2-\sqrt{(1-3x+x^2)^2-4x^3(1-x)}}{2x^2}.
\end{align*}
The desired formula now follows from the fact $f_{1001}=1+F(x,1)$, which completes the proof.
\end{proof}

\noindent \emph{Remark:} Substituting the formula found above for $F(x,1)$ back into \eqref{1001lem2e1} leads to an explicit expression for $F(x,y)$, whose coefficient of $x^n$ for $n \geq 1$ is seen to be the distribution on $B_n(1001)$ of the pjum statistic (marked by $y$).

\begin{theorem}\label{1010th}
If $\tau=1010$ or $1120$, then
\begin{equation}\label{1010the1}
f_\tau(x)=\frac{(1-x)(1-5x+7x^2-x^3)}{(1-2x)^2(1-3x+x^2)}.
\end{equation}
\end{theorem}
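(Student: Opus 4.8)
The plan is to follow the template of Theorems~\ref{0010th} and~\ref{1000th}: first obtain $f_{1010}$ by a direct structural decomposition of $B(1010)$, and then dispose of the pattern $1120$ by exhibiting a length-preserving bijection between $B_n(1010)$ and $B_n(1120)$, leaving only one generating-function computation to carry out. The essential first move is to record what an occurrence of each pattern looks like inside a $021$-avoider. Since the positive letters of any member of $B_n$ form a nondecreasing subsequence, an occurrence of $1010$, i.e.\ a subsequence $a\,b\,a\,b$ with $a>b$, forces $b=0$: were $b$ positive, the entries $a$ and $b$ at the first two chosen positions would be a descent among the positive letters, contradicting $021$-avoidance. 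Thus $\pi$ contains $1010$ exactly when some positive value occurs, is later followed by a $0$ and a repetition of that value, and is then followed by a further $0$; symbolically, the forbidden shape is $a\,0\,a\,0$ with $a\ge1$. The same reasoning shows that a $1120$-occurrence must read $a\,a\,c\,0$ with $0<a<c$.

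For $f_{1010}$ I would write a generic member of $B(1010)$ as $\pi=0^{a_0}W_10^{b_1}W_2\cdots W_m0^{b_m}$, where the $W_i$ are the maximal positive runs (so $W_1\cdots W_m$ is nondecreasing), $a_0\ge1$, $b_i\ge1$ for $i<m$, and $b_m\ge0$. The nondecreasing sequences automatically avoid $1010$ and contribute $\frac{1-x}{1-2x}$, as computed repeatedly above. For sequences with a descent I would translate the $a\,0\,a\,0$ condition into the observation that a positive value straddling a zero, meaning it appears in two distinct runs $W_i$, may do so only when its last occurrence lies in the terminal run $W_m$ and $b_m=0$. This produces a clean dichotomy: when $b_m\ge1$ no value may straddle a zero, so the runs use pairwise disjoint, strictly increasing value sets; when $b_m=0$ the run $W_m$ must contain every straddling value, while the non-straddling values each occupy a single run. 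I expect the jump-bearing portion of the terminal run to be accounted for by Lemma~\ref{1000lem}, exactly as the factor $h_2(x)$ entered the proof of Theorem~\ref{1000th}, after which summing the resulting geometric series in the run lengths should collapse to \eqref{1010the1}.

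To settle $\tau=1120$ I would construct a bijection $B_n(1010)\leftrightarrow B_n(1120)$ by comparing the two forbidden shapes $a\,0\,a\,0$ and $a\,a\,c\,0$ and rearranging the letters locally around the offending zero, in the spirit of the maps $g$, $h$, and $f$ appearing in Theorems~\ref{0010th} and~\ref{1000th}, while preserving both the length and the multiset of letters and ensuring that exactly one case applies to each sequence. The main obstacle throughout is bookkeeping rather than any single hard identity: I must make the case analysis behind $f_{1010}$ both exhaustive and mutually exclusive, correctly detecting when a straddling value is forced into $W_m$, and I must verify that the $1120$ bijection is genuinely well-defined and reversible. Should a transparent bijection prove elusive, the fallback is to rerun the same zero-run decomposition directly on $B(1120)$ using the $a\,a\,c\,0$ characterization, which should reproduce the identical generating function and thereby complete the proof.
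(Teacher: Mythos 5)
Your structural observations are correct: in a $021$-avoider an occurrence of $1010$ must have the form $a\,0\,a\,0$ with $a\geq1$, an occurrence of $1120$ must have the form $a\,a\,c\,0$ with $0<a<c$, and your straddling dichotomy (a positive value occupying two maximal positive runs forces its later occurrence into the terminal run with no trailing zero) is sound. However, your route to $f_{1010}$ is genuinely different from the paper's. The paper first restricts to the set $B^*$ of level-free members of $B(1010)$, exploits the fact that $1010$ has no levels to recover $f_{1010}(x)=g(x/(1-x))$ from the generating function $g$ of $B^*$, and obtains $g$ from a short recursive equation: a level-free $\pi=01\cdots k0\pi'$ with $\pi'$ empty or starting with $k+1$ has its tail $0\pi'$ synonymous with a nonempty member of $B^*$ (giving the term $\frac{x^2}{1-x}(g-1)$), while $\pi'$ starting with $k$ is forced to contain no zero and at most one jump. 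This recursion absorbs almost all of the bookkeeping your run-by-run decomposition would require; in particular, if you pursue your plan you will find that in the no-straddle case the ascent-sequence condition forces every run to be jump-free and each new run to begin at exactly $\max(W_{i-1})+1$, that at most one value can straddle and only across the last interior zero run, and that the terminal run then admits at most one jump (so Lemma \ref{1000lem} enters with $r=1$, not $r=2$ as in Theorem \ref{1000th}). Your approach should still reach \eqref{1010the1}, but at the cost of several more cases; the level-removal substitution is the labor-saving idea you are missing.

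For $\tau=1120$ the paper does \emph{not} produce a bijection with $B_n(1010)$: it computes $f_{1120}$ directly by classifying members according to their repeated positive letters and summing $\sum_{j=0}^{r-1}h_j(x)$ over the number $r$ of maximal blocks of the smallest repeated positive letter, arriving independently at the same rational function. Your proposed bijection is the one genuinely uncertain step in your plan: the two forbidden shapes $a\,0\,a\,0$ and $a\,a\,c\,0$ constrain the sequences in structurally dissimilar ways (the first restricts where a value may recur across zeros, the second restricts repeated letters that are later exceeded), and ``rearranging letters locally around the offending zero'' is not yet a map, let alone a reversible one with a clean case division. You should expect to fall back on your stated alternative of running the decomposition directly on $B(1120)$, which is exactly what the paper does.
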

\begin{proof}
We first consider the case $\tau=1010$.  Let $B^*$ denote the subset of $B(1010)$ whose members have no equal adjacent letters, i.e., contain no levels.  To find $f_{1010}$, it is more convenient to first find its restriction to $B^*$, which we will denote by $g$, and then replace $x$ by $\frac{x}{1-x}$ in $g$.  Note that one may proceed in this manner since the pattern in question contains no levels.  Clearly, members of $B^*$ of the form $01\cdots k$ for some $k$, including $\varepsilon$, are enumerated by $\frac{1}{1-x}$.  So assume $\pi \in B^*$ contains a repeated letter, the first of which must be zero.  That is, $\pi=01\cdots k0\pi'$ for some $k \geq 1$, where $\pi'$ is possibly empty.  If $\pi'=\varepsilon$ or $\pi'$ starts with $k+1$, then the section $0\pi'$ is synonymous with a nonempty member of $B^*$, upon subtracting $k$ from each nonzero letter in $0\pi'$.  Taking into account the initial string $01\cdots k$ for some $k \geq 1$, we get a contribution towards $g$ of $\frac{x^2}{1-x}(g-1)$ from such $\pi$.

On the other hand, if $\pi'$ starts with $k$, then $\pi'$ cannot contain 0 in order to avoid $1010$ and hence $\pi'=k(k+1)\cdots$ or $\pi'=k(k+1)\cdots(\ell-1)(\ell+1)(\ell+2)\cdots$ for some $\ell>k$. This implies that members of $B^*$ of the form $01\cdots k0\pi'$ where $\pi'$ starts with $k$ are enumerated by
$$\frac{x^3}{1-x}\cdot\frac{x}{1-x}+\frac{x^3}{1-x}\cdot\left(\frac{x}{1-x}\right)^2=\frac{x^4}{(1-x)^3}.$$
Combining the prior cases, we then get
$$g=\frac{1}{1-x}+\frac{x^2}{1-x}(g-1)+\frac{x^4}{(1-x)^3},$$
and solving for $g$ gives $g=\frac{1-2x+2x^3}{(1-x)^2(1-x-x^2)}$.
Thus, we have
$$f_{1010}(x)=g\left(\frac{x}{1-x}\right)=\frac{(1-x)(1-5x+7x^2-x^3)}{(1-2x)^2(1-3x+x^2)}.$$

We now compute $f_{1120}$.  The weight of the all-zero sequences, together with $\varepsilon$, is given by $\frac{1}{1-x}$, whereas those that contain 1 but have no repeated positive letter are seen to contribute $\frac{x^2}{(1-x)(1-2x)}$.  Now suppose $\pi \in B(1120)$ is such that its largest letter is the only repeated positive letter.  Then $\pi$ may be decomposed as
$$\pi=0^{u_0}10^{u_1}\cdots(k-1)0^{u_{k-1}}\pi', \qquad k \geq 1,$$
where $u_0>0$, $u_i\geq0$ for $1 \leq i \leq k-1$ and $\pi'$ starts with $k$ and contains at least two $k$'s.  Note that $\pi'$ is `binary' on $\{0,k\}$ in order to avoid 021, and hence is accounted for by
$\frac{x}{1-2x}-\frac{x}{1-x}=\frac{x^2}{(1-x)(1-2x)}$,
upon subtracting the weight of sequences of the form $k0^u$ for some $u\geq0$ since $k$ is required to occur at least twice.  The section $0^{u_0}10^{u_1}\cdots(k-1)0^{u_{k-1}}$, with the $u_i$ as given, contribute a factor of $\frac{x}{1-x}\left(1+\frac{x}{1-2x}\right)=\frac{x}{1-2x}$, and hence $\pi$ of the stated form are enumerated by $\frac{x}{1-2x}\cdot\frac{x^2}{(1-x)(1-2x)}=\frac{x^3}{(1-x)(1-2x)^2}$.

Next, we assume $\pi \in B(1120)$ contains a repeated positive letter that is not the largest letter.  Then we have for some $k, r \geq 1$ the decomposition
$$\pi=0^{u_0}10^{u_1}\cdots(k-1)0^{u_{k-1}}k^{v_1}0^{w_1}\cdots k^{v_r}0^{w_r}\pi', \quad (*)$$
where $\pi'$ is nonempty with first letter greater than $k$, the $u_i$ are as before, $v_1,\ldots,v_r>0$, $v_1+\cdots+v_r\geq 2$, $w_1,\ldots,w_{r-1}>0$ and $w_r \geq0$.  Then the section (which we will denote by $\pi-\pi'$) preceding $\pi'$ has $k+r-1$ ascents and thus the first letter $p$ of $\pi'$ satisfies $k+1 \leq p \leq k+r$, with $\pi'$ nondecreasing.  Note that if $\pi'$ starts with $p$, then $\pi'$ can contain at most $k+r-p$ jumps, with $k$ and $r$ being determined by $\pi-\pi'$.  Thus, if $\pi-\pi'$ is specified, along with the desired maximum possible number $s$ of jumps in $\pi'$ (with $s \in \{0,1,\ldots,r-1\}$), then the first letter of $\pi'$ is uniquely determined (and in fact is given by $k+r-s$).  Thus, we have that the section $\pi'$ within $\pi$ of the form (*) is accounted for by $\sum_{j=0}^{r-1}h_j(x)$ once $\pi-\pi'$ has been specified.

To determine the contribution towards $f_{1120}$ coming from $\pi$ of the form (*), we must differentiate the cases when $r=1$ and $r \geq 2$.  If $r=1$, then we subtract the excluded sequences of the form $k0^w$ for some $w \geq0$ as $k$ is to occur at least twice.  Hence, the string $k^{v_1}0^{w_1}$ appearing in $\pi$ of the form (*) where $r=1$ is accounted for by $\frac{x}{(1-x)^2}-\frac{x}{1-x}=\frac{x^2}{(1-x)^2}$.  Thus, the $\pi$ of the form (*) where $r=1$ are enumerated by
$$\frac{x}{1-2x}\cdot\frac{x^2}{(1-x)^2}\cdot h_0(x)=\frac{x^4}{(1-x)^2(1-2x)^2}.$$
On the other hand, by the previous observations, we have that $\pi$ of the form (*) where $r \geq 2$ are enumerated by
$$\frac{x}{1-2x}\cdot \frac{x^{2r-1}}{(1-x)^{2r}}\cdot\sum_{j=0}^{r-1}h_j(x)$$
for each $r$.  Summing over all $r \geq 2$, and making use of Lemma \ref{1000lem}, then gives
\begin{align*}
&\frac{1}{1-2x}\sum_{r\geq2}\left(\frac{x}{1-x}\right)^{2r}\sum_{j=0}^{r-1}h_j(x)=\frac{x}{(1-2x)^2}\sum_{r\geq2}\left(\frac{x}{1-x}\right)^{2r}\sum_{j=0}^{r-1}\left(\frac{1-x}{1-2x}\right)^j\\
&=\frac{x}{(1-2x)^2}\sum_{r\geq2}\left(\frac{x}{1-x}\right)^{2r}\cdot\frac{1-\left(\frac{1-x}{1-2x}\right)^r}{1-\frac{1-x}{1-2x}}=\frac{1}{1-2x}\sum_{r\geq2}\left(\frac{x}{1-x}\right)^{2r}\left(\left(\frac{1-x}{1-2x}\right)^r-1\right)\\
&=\frac{1}{1-2x}\sum_{r\geq2}\left(\left(\frac{x^2}{(1-x)(1-2x)}\right)^r-\left(\frac{x}{1-x}\right)^{2r}\right)=\frac{x^5(2-x)}{(1-x)^2(1-2x)^2(1-3x+x^2)}.
\end{align*}
Combining all of the prior cases for $\pi \in B(1120)$, we get
\begin{align*}
f_{1120}&=\frac{1}{1-x}+\frac{x^2}{(1-x)(1-2x)}+\frac{x^3}{(1-x)(1-2x)^2}+\frac{x^4}{(1-x)^2(1-2x)^2}+\frac{x^5(2-x)}{(1-x)^2(1-2x)^2(1-3x+x^2)}\\
&=\frac{(1-x)(1-5x+7x^2-x^3)}{(1-2x)^2(1-3x+x^2)},
\end{align*}
which completes the proof.
\end{proof}

\noindent\emph{Remarks:} Several of the underlying sequences from the prior results correspond to entries in the OEIS \cite{Sloane} and yield new combinatorial interpretations of these entries in terms of pattern-avoiding ascent sequences.   We first note that the sequence $b_n(1010)$ or $b_n(1120)$ for $n \geq0$ from the preceding theorem corresponds to entry A244885 in the OEIS.  This sequence is also known to enumerate other combinatorial structures, such as the $1243$-avoiding Catalan words of length $n$ or the peak-equivalence classes on the set of Dyck paths of semi-length $n$, and it would be interesting to find a bijection with such a structure and $B_n(1010)$ or $B_n(1120)$.  From Theorem \ref{0011th}, it is seen that $b_n(0011)=b_n(0112)=(n-1)2^{n-2}+1$ for $n \geq 1$, which matches entry A005183 with offset one.  Finally, the sequence from Theorem \ref{1001th} matches that of A365510 for $1 \leq n \leq 11$, though no formula of the generating function is given for the terms in this entry.  Perhaps it would be possible to find an explicit bijection between $B_n(\tau)$ where $\tau \in \{1001,1011,1101\}$ and one of the known structures enumerated by A365510. \medskip

\begin{theorem}\label{1020th}
If $\tau=1020$ or $1022$, then
\begin{equation}\label{1020the1}
f_\tau(x)=\frac{1-9x+32x^2-56x^3+49x^4-19x^5+x^6}{(1-x)(1-2x)^3(1-3x+x^2)}.
\end{equation}
\end{theorem}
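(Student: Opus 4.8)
The plan is to mirror the approach used for Theorems \ref{1000th} and \ref{1010th}: compute one of $f_{1020}$, $f_{1022}$ directly by a structural decomposition of the avoiders, and then obtain the other either through an explicit bijection between $B_n(1020)$ and $B_n(1022)$ or by an analogous direct computation. I would treat $f_{1020}$ first. The starting observation is that in any ascent sequence the first positive letter must equal $1$, since it is preceded only by zeros, which contribute no ascents, and that $021$-avoidance forces the subsequence of positive letters to be nondecreasing. Calling a zero \emph{internal} if it is preceded by a positive letter, an occurrence of $1020$ in such a sequence must realize its two ``$0$''s by genuine zeros, with a positive value exceeding $1$ lying strictly between two internal zeros. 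Hence $\pi\in B(1020)$ if and only if every positive letter lying strictly between the first and the last internal zero equals $1$.

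This characterization yields a clean trichotomy on which to base the decomposition. Either (0) $\pi$ is nondecreasing, in which case it is a staircase word and is counted by $h_0=\frac{x}{1-2x}$ (together with $\varepsilon$); or (i) the internal zeros form a single maximal block, equivalently no positive letter occurs between the first and last internal zero, so that $\pi=w\,0^r\rho$ where $w$ is a nonempty staircase word starting at $0$ and ending at some $d\ge1$, $r\ge1$, and $\rho$ is a nondecreasing positive tail whose first letter is at least $d$; or (ii) some $1$ occurs between two internal zeros, so that the prefix of $\pi$ up to its last internal zero is a word over $\{0,1\}$ beginning $0^{c_0}1$ and ending in $0$, again followed by a nondecreasing positive tail. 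In cases (i) and (ii) the tail may contain jumps, the number of which is bounded by the ascents already accumulated in the prefix; I would account for the tail exactly as in the proof of Theorem \ref{1010th}, by summing $\sum_{j}h_j(x)$ with the upper bound on $j$ dictated by the prefix and invoking the closed form of Lemma \ref{1000lem}. Summing the three contributions and simplifying should produce \eqref{1020the1}; I expect the factor $1-3x+x^2$ to arise, as in the $1120$ computation, from the geometric series over the number of $1$-blocks in case (ii) and their associated jump allowances.

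For the pattern $1022$, the analogous characterization is that $\pi\in B(1022)$ precisely when no letter exceeding $1$ occurs more than once after the first internal zero. To finish I would either repeat the run-based decomposition under this condition, or, following the template of the case-based bijection $f$ constructed in Theorem \ref{1000th}, define a length-preserving map between $B_n(1020)$ and $B_n(1022)$ that redistributes the repeated letters and interleaved zeros of the tail so as to convert the ``binary-middle'' condition into the ``distinct-large-letter'' condition, and then verify it is reversible with exactly one case applying to each sequence.

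I expect the main obstacle to be the precise bookkeeping of the ascent budget in the tails of cases (i) and (ii): one must verify that the number of permissible jumps is exactly what makes $\sum_{j}h_j$ applicable, since an off-by-one here would corrupt the $(1-2x)^3(1-3x+x^2)$ denominator. The secondary difficulty, should the bijective route be taken for $1022$, is the familiar one of checking well-definedness and invertibility across all cases of the map; if instead a second direct computation is used, the obstacle reduces to matching the algebra of the two independent derivations against the same rational function.
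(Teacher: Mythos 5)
Your proposal is correct and follows essentially the same route as the paper: a run-length decomposition of the $021$-avoiders governed by where the letters exceeding $1$ sit relative to the internal zeros, with the nondecreasing tails counted via the $h_j$ of Lemma \ref{1000lem} (your case split is organized slightly differently --- the paper splits on whether a $0$ separates the first $1$ from the leftmost letter exceeding $1$, rather than on the block structure of the internal zeros --- but your characterizations of $B(1020)$ and $B(1022)$ and your jump budgets are correct, and the contributions sum to the same rational function). The only piece left unspecified is the bijection with $B_n(1022)$; the paper's map simply replaces each maximal run $a^t$ with $a\geq 2$ occurring after the rightmost $0$ by $a0^{t-1}$, which is exactly the kind of ``redistribution'' you describe.
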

\begin{proof}
We first find $f_{1020}$.  Clearly, the binary members of $B(1020)$, together with $\varepsilon$, are enumerated by $\frac{1-x}{1-2x}$.  So assume henceforth at $\pi \in B(1020)$ contains at least one letter greater than 1.  Suppose first that at least one 0 occurs between the first 1 and the leftmost letter exceeding 1.  In this case, we have
$$\pi=0^{a_0}1^{b_1}0^{a_1}\cdots1^{b_\ell}0^{a_\ell}j\pi', \qquad \ell \geq 1,$$
where $\pi'$ is possibly empty, $2 \leq j \leq \ell+1$, $a_i>0$ for $0 \leq i \leq \ell-1$, $b_i>0$ for all $i$ and $a_\ell \geq0$ if $\ell \geq 2$, with $a_1>0$ if $\ell=1$.  Note that the section $j\pi'$ cannot contain 0 in order to avoid 1020, and hence is nondecreasing with at most $\ell+1-j$ jumps.  Thus, it is accounted for by $h_{\ell+1-j}$, where $h_i$ is as in Lemma \ref{1000lem}, for each $\ell$ and $j$.   Allowing $j$ to vary then yields a contribution of $\sum_{j=2}^{\ell+1}h_{\ell+1-j}$ for the section $j\pi'$ for each $\ell$.  Considering separately the cases when $\ell=1$ or $\ell \geq 2$, we have that $\pi$ of the stated form above are enumerated by
\begin{align*}
&\frac{x^3}{(1-x)^3}h_0+\sum_{\ell \geq 2}\frac{x^{2\ell}}{(1-x)^{2\ell+1}}\sum_{j=2}^{\ell+1}h_{\ell+1-j}=\frac{x^4}{(1-x)^3(1-2x)}+\frac{1}{1-2x}\sum_{\ell \geq2}\left(\frac{x}{1-x}\right)^{2\ell+1}\cdot\frac{1-\left(\frac{1-x}{1-2x}\right)^\ell}{1-\frac{1-x}{1-2x}}\\
&=\frac{x^4}{(1-x)^3(1-2x)}+\frac{1}{1-x}\sum_{\ell \geq2}\left(\left(\frac{x^2}{(1-x)(1-2x)}\right)^\ell-\left(\frac{x}{1-x}\right)^{2\ell}\right)\\
&=\frac{x^4}{(1-x)^3(1-2x)}+\frac{x^5(2-x)}{(1-x)^3(1-2x)(1-3x+x^2)}=\frac{x^4}{(1-x)^2(1-2x)(1-3x+x^2)}.
\end{align*}

So assume $\pi$ does not contain a 0 between its first 1 and its leftmost letter greater than 1, i.e., $\pi$ starts $0^{a_0}1^{a_1}2^{a_2}$.  If $\pi$ is nondecreasing, then such $\pi$ contribute $\sum_{k\geq2}\left(\frac{x}{1-x}\right)^{k+1}=\frac{x^3}{(1-x)^2(1-2x)}$ towards $f_{1020}$.  Otherwise, we must have $\pi=0^{a_0}1^{a_1}\cdots k^{a_k}0^r\pi'$, where $k \geq 2$, all exponents are positive and $\pi'$ starts with $k$ or $k+1$ if nonempty.  It is seen for each given $k$ that the section $\pi'$ contributes a factor of $\lambda$, where $\lambda$ is as in the proof of Theorem \ref{1000th}.  Since the remaining letters $0^{a_0}1^{a_1}\cdots k^{a_k}0^r$ for some $k \geq 2$ are accounted for by $\frac{x^4}{(1-x)^3(1-2x)}$, we have that $\pi$ of the stated form are enumerated by $\frac{x^4}{(1-x)^3(1-2x)}\lambda=\frac{x^4}{(1-x)(1-2x)^3}$.
Combining all of the cases of $\pi$ above then yields
\begin{align*}
f_{1020}&=\frac{1-x}{1-2x}+\frac{x^4}{(1-x)^2(1-2x)(1-3x+x^2)}+\frac{x^3}{(1-x)^2(1-2x)}+\frac{x^4}{(1-x)(1-2x)^3}\\
&=\frac{1-9x+32x^2-56x^3+49x^4-19x^5+x^6}{(1-x)(1-2x)^3(1-3x+x^2)}.
\end{align*}

To complete the proof of \eqref{1020the1}, we define a bijection $g$ between $B_n(1020)$ and $B_n(1022)$.  Let $\rho \in B_n(1020)$ and consider the section $\rho'$, if it exists, occurring to the right of the rightmost 0 of $\rho$, assuming $\rho$ has at least two runs of $0$.  If no such section $\rho'$ exists, then let $g(\rho)=\rho$.  Otherwise, replace each (maximal) string within $\rho'$ of the form $a^t$, where $a \geq 2$ and $t \geq 1$, with $a0^{t-1}$. Let $g(\rho)$ denote the resulting ascent sequence.  Combining the two cases, one may verify that $g$ provides the desired bijection between $B_n(1020)$ and $B_n(1022)$.
\end{proof}

\begin{theorem}\label{1200th}
If $\tau \in \{1200,1220,1230\}$, then
\begin{equation}\label{1200the1}
f_\tau(x)=\frac{1-7x+17x^2-16x^3+5x^4-x^5}{(1-2x)(1-3x+x^2)^2}.
\end{equation}
\end{theorem}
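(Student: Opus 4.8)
The plan is to compute $f_\tau$ explicitly for a single representative of the Wilf class, say $\tau=1230$, and then transfer the result to $1220$ and $1200$ by explicit bijections. The starting point is a structural description of $1230$-avoidance inside $B$. Since a member of $B$ has its positive letters arranged in nondecreasing order, an occurrence of $1230$ must consist of three positive letters forming a strictly increasing subsequence followed by a later $0$: the letter playing the role of the final pattern symbol lies strictly below three larger positive letters and hence must equal $0$. Consequently, $\pi\in B$ avoids $1230$ if and only if no $0$ occurs after the first appearance of the third distinct positive value of $\pi$. I would verify this criterion directly; it already yields $b_n(1230)=C_n$ for $n\le 4$ and $b_5=C_5-1=41$, the single offending sequence being $01230$, in agreement with Table~\ref{tab1}.

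To obtain $f_{1230}$, I would split $B(1230)$ according to the number $d$ of distinct positive letters. The cases $d=0$ and $d=1$ contribute $\frac{1}{1-x}$ (the all-zero words, including $\varepsilon$) and $\frac{x^2}{(1-x)(1-2x)}$ (the binary words on $\{0,1\}$ containing at least one $1$), respectively. For $d\ge 2$, I would write $\pi$ as a prefix using only the positive values $1$ and $v_2$ with interspersed $0$'s, followed, when $d\ge 3$, by the segment beginning at the first occurrence of the third distinct positive value; by the criterion above this tail contains no $0$ and is a nondecreasing positive sequence. Because positive values may skip (as in $01013$), the admissible tails of a given length are governed by their number of jumps, which is bounded by the ascent count accumulated in the prefix; this is exactly the situation handled by Lemma~\ref{1000lem}, so the tail is accounted for by a weighted sum $\sum_{r}h_r(x)$. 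Summing the resulting geometric series---one for the two-value prefix and one for the nondecreasing tail---I expect each to furnish a factor of $(1-3x+x^2)^{-1}$, which together with the elementary $\frac{1}{1-2x}$ contributions produces the denominator $(1-2x)(1-3x+x^2)^2$ and, after collecting numerators, the stated formula.

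To complete the proof I would exhibit length-preserving bijections $B_n(1230)\leftrightarrow B_n(1220)\leftrightarrow B_n(1200)$. For $1220$ versus $1200$ the natural device is the replacement map of Theorem~\ref{1020th}: in the portion of the sequence lying beyond the relevant ascending structure, one trades a maximal run $a^t$ of a repeated positive letter for the string $a0^{t-1}$ (and conversely), thereby converting a doubled middle value of a potential $1220$ into the pair of trailing zeros of a potential $1200$. For $1230$ versus $1220$ I would design a similar local transformation acting on the nondecreasing tail, exchanging a strictly increasing step (a jump) for a repeated letter so as to turn occurrences of one forbidden configuration into the other. In each case the map would be defined by cases and reversed by reading off whether the smallest relevant positive letter is repeated and whether a $0$ intervenes, in the spirit of the bijections $g$ and $h$ of Section~2.

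The main obstacle is twofold. First, in the generating-function computation the interaction between the jump constraint on the positive tail and the accumulated ascent count forces a careful separation of the subcases $d=2$ and $d\ge 3$, and of whether the second distinct positive value equals $2$ or exceeds it; the geometric summations must be organized so that the $h_r$ weights collapse into the two factors of $(1-3x+x^2)$. Second, and more delicately, one must check that each proposed bijection is genuinely well defined---that its image again avoids $021$ together with the target pattern---and that exactly one case applies to every sequence in the domain and codomain, so that the map is invertible; as in Theorems~\ref{0010th} and~\ref{1000th}, this case verification, rather than any single clever idea, is where the real work lies.
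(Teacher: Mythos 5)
Your computation of $f_{1230}$ follows essentially the same route as the paper: the observation that in a $021$-avoider the bottom letter of any occurrence of $1230$ must be $0$ (so that avoidance amounts to forbidding a $0$ after the first occurrence of the third distinct positive value), the split according to the number of distinct positive letters, and the treatment of the zero-free nondecreasing tail via the jump-counting generating functions $h_r$ of Lemma \ref{1000lem} are all exactly the paper's argument, and the geometric summations do collapse as you predict to give $f_{1230}=\frac{1-x}{1-2x}+\frac{x^3}{(1-3x+x^2)^2}$. Your proposed bijection between $B_n(1200)$ and $B_n(1220)$ (replacing each maximal run $w^b$ with $w\geq2$ in the relevant nondecreasing section by $w0^{b-1}$) is also precisely the map $g$ the paper uses for that pair.

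The genuine gap is the first link of your chain $B_n(1230)\leftrightarrow B_n(1220)\leftrightarrow B_n(1200)$: you never construct the bijection between $B_n(1230)$ and $B_n(1220)$, and the sketch you give (``exchanging a strictly increasing step for a repeated letter in the tail'') does not obviously work. The two avoidance conditions are structurally quite different: avoiding $1230$ forbids a $0$ after the \emph{first} occurrence of the \emph{third} distinct positive value, whereas avoiding $1220$ forbids a $0$ after the \emph{second} occurrence of any non-minimal positive value, so the two families do not decompose in parallel ways and there is no canonical ``tail'' on which a local jump-for-repetition swap can act bijectively (in particular, a $1230$-avoider may have arbitrarily many $0$'s interspersed among repeated copies of its second distinct positive value, a configuration with no evident counterpart on the $1220$ side). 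The paper does not supply such a bijection either; instead it closes the loop by a second, independent and considerably more involved generating-function computation of $f_{1200}$, decomposing $B(1200)$ according to the forms \eqref{1200form1} and \eqref{1200form2} and verifying after a triple-sum evaluation that the same rational function $\frac{1-x}{1-2x}+\frac{x^3}{(1-3x+x^2)^2}$ emerges, after which the $g$-bijection transfers the count to $1220$. As written, your proposal establishes $f_{1230}$ and the equality $b_n(1200)=b_n(1220)$, but not that these two quantities agree; you would need either to carry out the direct computation of $f_{1200}$ (or $f_{1220}$) as the paper does, or to actually produce and verify the missing $1230$-to-$1220$ bijection, which is the hardest and least plausible part of your plan.
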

\begin{proof}
We first find $f_{1230}$. The binary members, together with $\varepsilon$, are accounted for by $\frac{1-x}{1-2x}$, so assume henceforth $\pi \in B(1230)$ contains a letter great than 1.  First suppose $\pi$ can be decomposed as
$$\pi=0^{a_0}1^{b_1}0^{a_1}\cdots 1^{b_\ell}0^{a_\ell}\pi', \qquad \ell \geq 1,$$
where $a_i>0$ for $0 \leq i \leq \ell-1$ with $a _\ell \geq0$, $b_i>0$ for all $i$ and $\pi'$ is a nonempty sequence on $\{0,u\}$ for some $u>1$ that starts with $u$.  Note $\pi$ an ascent sequence implies $2 \leq u \leq \ell+1$. Considering all possible $\ell$, we have that $\pi$ of the stated form are enumerated by
\begin{align*}
\sum_{\ell \geq 1}\frac{x^{2\ell}}{(1-x)^{2\ell+1}}\cdot\frac{\ell x}{1-2x}&=\frac{x^3}{(1-x)^3(1-2x)}\sum_{\ell \geq1}\ell\left(\frac{x^2}{(1-x)^2}\right)^{\ell-1}=\frac{x^3}{(1-x)^3(1-2x)}\cdot\frac{1}{\left(1-\frac{x^2}{(1-x)^2}\right)^2}\\
&=\frac{x^3(1-x)}{(1-2x)^3}.
\end{align*}

So assume $\pi$ contains at least three distinct positive letters.  Then we have
$$\pi=0^{a_0}1^{b_1}0^{a_1}\cdots1^{b_\ell}0^{a_{\ell}}u^{c_1}0^{d_1}\cdots u^{c_p}0^{d_p}\pi', \qquad \ell, p \geq 1,$$
where the $a_i$, $b_i$ and $u$ are as before, $c_i>0$ for all $i$, $d_i>0$ for $1 \leq i \leq p-1$ with $d_p\geq0$ and $\pi' \neq \varepsilon$ starts with $v$ for some $v>u$.  Note that there are $\ell+p$ ascents occurring strictly prior to the first $v$ and hence $v \in [u+1,\ell+p+1]$.  Then $\pi$ avoiding $021$ and $1230$ implies $\pi'$ is nondecreasing and hence is accounted for by $h_{\ell+p+1-v}(x)$, as it contains at most $\ell+p+1-v$ jumps, where $h_i(x)$ is as in Lemma \ref{1000lem}.  Conversely, any $\pi$ of the stated form with $\pi'$ as described is seen to belong to $B(1230)$. Considering all possible $\ell$, $p$, $u$ and $v$, we have that $\pi$ of the form above are enumerated by
$$\sum_{\ell \geq1}\sum_{p \geq1}\sum_{u=2}^{\ell+1}\sum_{v=u+1}^{\ell+p+1}\frac{x^{2\ell}}{(1-x)^{2\ell+1}}\cdot\frac{x^{2p-1}}{(1-x)^{2p}}\cdot h_{\ell+p+1-v}(x).$$

This last sum may be simplified as follows:
\begin{align*}
&\sum_{\ell \geq1}\sum_{p \geq1}\sum_{u=2}^{\ell+1}\sum_{v=u+1}^{\ell+p+1}\frac{x^{2\ell+2p-1}}{(1-x)^{2\ell+2p+1}}\cdot\frac{x}{1-2x}\left(\frac{1-x}{1-2x}\right)^{\ell+p+1-v}\\
&=\frac{1}{(1-2x)^2}\sum_{\ell \geq1}\sum_{p \geq1}\left(\frac{x^2}{(1-x)(1-2x)}\right)^{\ell+p}\sum_{u=2}^{\ell+1}\sum_{v=u+1}^{\ell+p+1}\left(\frac{1-2x}{1-x}\right)^v\\
&=\frac{1}{x(1-2x)}\sum_{\ell \geq1}\sum_{p \geq1}\left(\frac{x^2}{(1-x)(1-2x)}\right)^{\ell+p}\sum_{u=2}^{\ell+1}\left(\left(\frac{1-2x}{1-x}\right)^u-\left(\frac{1-2x}{1-x}\right)^{\ell+p+1}\right)\qquad\qquad\qquad\qquad\quad\\
&=\frac{1}{x(1-2x)}\sum_{\ell \geq1}\sum_{p \geq1}\left(\frac{x^2}{(1-x)(1-2x)}\right)^{\ell+p}\left(\frac{\left(\frac{1-2x}{1-x}\right)^2-\left(\frac{1-2x}{1-x}\right)^{\ell+2}}{1-\frac{1-2x}{1-x}}-\ell\left(\frac{1-2x}{1-x}\right)^{\ell+p+1}\right)\\
&=\frac{1-2x}{x^2(1-x)}\sum_{\ell \geq1}\sum_{p \geq1}\left(\frac{x^2}{(1-x)(1-2x)}\right)^{\ell+p}\left(1-\left(\frac{1-2x}{1-x}\right)^\ell\right)-\frac{1}{x(1-x)}\sum_{\ell\geq1}\sum_{p\geq1}\ell\left(\frac{x^2}{(1-x)^2}\right)^{\ell+p}\\
&=\frac{1-2x}{x^2(1-x)}\cdot\frac{x^4}{(1-3x+x^2)^2}-\frac{1-2x}{x^2(1-x)}\cdot\frac{x^4}{(1-2x)(1-3x+x^2)}-\frac{1}{x(1-x)}\cdot\frac{x^4(1-x)^2}{(1-2x)^3}\\
&=\frac{x^3}{(1-3x+x^2)^2}-\frac{x^3(1-x)}{(1-2x)^3}.
\end{align*}
Combining the last case with the two prior implies
$$f_{1230}=\frac{1-x}{1-2x}+\frac{x^3}{(1-3x+x^2)^2}=\frac{1-7x+17x^2-16x^3+5x^4-x^5}{(1-2x)(1-3x+x^2)^2},$$
as desired.

We now find $f_{1200}$. The binary sequences are accounted for by $\frac{1-x}{1-2x}$, so assume $\pi \in B(1200)$ contains a letter greater than 1.  First suppose $\pi$ is decomposable as
\begin{equation}\label{1200form1}
\pi=0^{a_0}1^{b_1}0^{a_1}\cdots1^{b_\ell}0^{a_\ell}\pi'\sigma, \qquad \ell \geq 1,
\end{equation}
where $a_j>0$ for $0 \leq j \leq \ell-1$ with $a_\ell \geq0$, $b_j>0$ for all $j$, $\sigma=0$ or $\varepsilon$, and $\pi' \neq \varepsilon$ starts with $i \in [2,\ell+1]$ and does not contain 0. Note $\pi'$ is nondecreasing with at most $\ell+1-i$ jumps, and hence is enumerated by $h_{\ell+1-i}$. Considering all possible $\ell$ and $i$ then implies $\pi$ of the form \eqref{1200form1} are enumerated by
\begin{align*}
&\sum_{\ell \geq1}\frac{x^{2\ell}(1+x)}{(1-x)^{2\ell+1}}\sum_{i=2}^{\ell+1}\frac{x}{1-2x}\left(\frac{1-x}{1-2x}\right)^{\ell+1-i}=\frac{1+x}{1-x}\sum_{\ell \geq1}\left(\frac{x}{1-x}\right)^{2\ell}\left(\left(\frac{1-x}{1-2x}\right)^\ell-1\right)\\
&=\frac{x^2(1+x)}{1-x}\left(\frac{1}{1-3x+x^2}-\frac{1}{1-2x}\right)=\frac{x^3(1+x)}{(1-2x)(1-3x+x^2)}.
\end{align*}

Otherwise, $\pi$ is expressible as
\begin{equation}\label{1200form2}
\pi=0^{a_0}1^{b_1}0^{a_1}\cdots1^{b_\ell}0^{a_\ell}\pi'0\pi'', \qquad \ell \geq1,
\end{equation}
where the $a_i$ and $b_i$ are as before, $\pi'$ and $\pi''$ are nonempty and $\pi'$ starts with $i$ and has exactly $j$ jumps for some $i \in [2,\ell+1]$ and $0 \leq j \leq \ell+1-i$.  Note $\pi$ avoiding $1200$ implies neither $\pi'$ nor $\pi''$ can contain 0, and hence are nondecreasing.
If $p$ denotes the last letter of $\pi'$ and $q$ the first letter of $\pi''$, then we have $q=p+a$ for some $0 \leq a \leq \ell+2-i-j$, as $\pi'$ starting with $i$ and containing exactly $j$ jumps implies that at most $\ell+1-i-j$ integers can be skipped in going from $p$ to $q$.  Then $\pi''$ starting with $p=q+a$ implies it can contain at most $\ell+2-i-j-a$ jumps for each $a$.

Let $h_j^*=h_j^*(x)$ denote the generating function that counts nonempty nondecreasing sequences starting with 0 and containing exactly $j$ jumps.  Then we have $h_0^*=h_0$ and $h_j^*=h_j-h_{j-1}$ for $j \geq 1$, by the definitions.  By Lemma \ref{1000lem}, we have
$$h_j^*(x)=\frac{x}{1-2x}\left(\left(\frac{1-x}{1-2x}\right)^j-\left(\frac{1-x}{1-2x}\right)^{j-1}\right)=\frac{x^2(1-x)^{j-1}}{(1-2x)^{j+1}}, \qquad j \geq 1.$$
Considering all possible $\ell$, $i$, $j$ and $a$ implies $\pi$ of the form \eqref{1200form2} are enumerated by
$$\sum_{\ell \geq1}\frac{x^{2\ell}}{(1-x)^{2\ell+1}}\sum_{i=2}^{\ell+1}\sum_{a=0}^{\ell+2-i}xh_0(x)h_{\ell+2-i-a}(x)+\sum_{\ell \geq2}\frac{x^{2\ell}}{(1-x)^{2\ell+1}}\sum_{i=2}^{\ell}\sum_{j=1}^{\ell+1-i}h_j^*(x)\sum_{a=0}^{\ell+2-i-j}xh_{\ell+2-i-j-a}(x),$$
where we have differentiated the cases when $j=0$ or $j \geq 1$.  The first multi-sum in the preceding expression may be simplified as follows:
\begin{align*}
&\frac{x^3}{(1-x)(1-2x)^2}\sum_{\ell \geq1}\left(\frac{x}{1-x}\right)^{2\ell}\sum_{i=2}^{\ell+1}\sum_{a=0}^{\ell+2-i}\left(\frac{1-x}{1-2x}\right)^{\ell+2-i-a}\\
&=\frac{x^2}{(1-x)(1-2x)}\sum_{\ell \geq1}\left(\frac{x}{1-x}\right)^{2\ell}\sum_{i=2}^{\ell+1}\left(\left(\frac{1-x}{1-2x}\right)^{\ell+3-i}-1\right)\\
&=\frac{x(1-x)}{(1-2x)^2}\sum_{\ell \geq1}\left(\frac{x^2}{(1-x)(1-2x)}\right)^{\ell}\left(1-\left(\frac{1-2x}{1-x}\right)^\ell\right)-\frac{x^4}{(1-x)^3(1-2x)}\sum_{\ell\geq1}\ell\left(\frac{x^2}{(1-x)^2}\right)^{\ell-1}\\
&=\frac{x^4(1-x)^2}{(1-2x)^3(1-3x+x^2)}-\frac{x^4(1-x)}{(1-2x)^3}.
\end{align*}

The second multi-sum above is given by
\begin{align}
&\sum_{\ell \geq2}\frac{x^{2\ell}}{(1-x)^{2\ell+1}}\sum_{i=2}^\ell\sum_{j=1}^{\ell+1-i}\frac{x^2(1-x)^{j-1}}{(1-2x)^{j+1}}\sum_{a=0}^{\ell+2-i-j}\frac{x^2}{1-2x}\left(\frac{1-x}{1-2x}\right)^{\ell+2-i-j-a}\notag\\
&=\frac{x^4}{(1-2x)^4}\sum_{\ell \geq2}\left(\frac{x^2}{(1-x)(1-2x)}\right)^\ell\sum_{i=2}^\ell\sum_{j=1}^{\ell+1-i}\sum_{a=0}^{\ell+2-i-j}\left(\frac{1-2x}{1-x}\right)^{i+a}.\label{multsum2*}
\end{align}
Simplifying the inner triple sum in the last expression, we have
\begin{align*}
&\sum_{i=2}^\ell\sum_{j=1}^{\ell+1-i}\sum_{a=0}^{\ell+2-i-j}\left(\frac{1-2x}{1-x}\right)^{i+a}=\frac{1-x}{x}\sum_{i=2}^\ell\left(\frac{1-2x}{1-x}\right)^i\sum_{j=1}^{\ell+1-i}\left(1-\left(\frac{1-2x}{1-x}\right)^{\ell+3-i-j}\right)\\
&=\frac{1-x}{x}\sum_{i=2}^{\ell}(\ell+1-i)\left(\frac{1-2x}{1-x}\right)^i-\frac{1-x}{x}\sum_{i=2}^\ell\left(\frac{1-2x}{1-x}\right)^{\ell+3}\sum_{j=1}^{\ell+1-i}\left(\frac{1-x}{1-2x}\right)^j\\
&=\frac{1-x}{x}\sum_{i=1}^{\ell-1}i\left(\frac{1-2x}{1-x}\right)^{\ell+1-i}+\frac{(1-x)^2}{x^2}\sum_{i=2}^{\ell+1}\left(\left(\frac{1-2x}{1-x}\right)^{\ell+3}-\left(\frac{1-2x}{1-x}\right)^{i+2}\right).
\end{align*}
Note that the last formula yields zero when $\ell=1$, and hence one may start the outer sum in $\ell$ on the right side of \eqref{multsum2*} at $\ell=1$.  Substituting this last expression for the inner triple sum into \eqref{multsum2*}, we get
\begin{align*}
&\frac{x^3(1-x)}{(1-2x)^4}\sum_{\ell \geq2}\left(\frac{x}{1-x}\right)^{2\ell}\sum_{i=1}^{\ell-1}i\left(\frac{1-x}{1-2x}\right)^{i-1}+\frac{x^2}{(1-x)(1-2x)}\sum_{\ell\geq1}\ell\left(\frac{x}{1-x}\right)^{2\ell}\\
&\quad-\frac{x^2}{(1-x)^2}\sum_{\ell\geq1}\left(\frac{x^2}{(1-x)(1-2x)}\right)^\ell\sum_{i=2}^{\ell+1}\left(\frac{1-2x}{1-x}\right)^{i-2}\\
&=\frac{x^3(1-x)}{(1-2x)^4}\sum_{i\geq1}i\left(\frac{1-x}{1-2x}\right)^{i-1}\cdot\frac{(x^2/(1-x)^2)^{i+1}}{1-\frac{x^2}{(1-x)^2}}+\frac{x^2}{(1-x)(1-2x)}\cdot\frac{x^2}{(1-x)^2\left(1-\frac{x^2}{(1-x)^2}\right)^2}\\
&\quad-\frac{x}{1-x}\sum_{\ell \geq1}\left(\frac{x^2}{(1-x)(1-2x)}\right)^\ell\left(1-\left(\frac{1-2x}{1-x}\right)^\ell\right)\\
&=\frac{x^7}{(1-x)(1-2x)^5}\sum_{i\geq1}i\left(\frac{x^2}{(1-x)(1-2x)}\right)^{i-1}+\frac{x^4(1-x)}{(1-2x)^3}-\frac{x^3}{1-x}\left(\frac{1}{1-3x+x^2}-\frac{1}{1-2x}\right)\\
&=\frac{x^7(1-x)}{(1-2x)^3(1-3x+x^2)^2}+\frac{x^4(1-x)}{(1-2x)^3}-\frac{x^4}{(1-2x)(1-3x+x^2)}.
\end{align*}

Thus, from the expressions found above for the two multi-sums, we have that $\pi$ of the form \eqref{1200form2} are enumerated by
$$\frac{x^4(1-x)^2}{(1-2x)^3(1-3x+x^2)}+\frac{x^7(1-x)}{(1-2x)^3(1-3x+x^2)^2}-\frac{x^4}{(1-2x)(1-3x+x^2)}.$$
Combining the prior cases of $\pi$, and observing some cancellation, we get
\begin{align*}
f_{1200}&=\frac{1-x}{1-2x}+\frac{x^3}{(1-2x)(1-3x+x^2)}+\frac{x^4(1-x)^2}{(1-2x)^3(1-3x+x^2)}+\frac{x^7(1-x)}{(1-2x)^3(1-3x+x^2)^2}\\
&=\frac{1-x}{1-2x}+\frac{x^3}{(1-3x+x^2)^2},
\end{align*}
as before.

To complete the proof of \eqref{1200the1}, we define a bijection $g$ between $B_n(1200)$ and $B_n(1220)$.  Let $\pi \in B_n(1200)$.  If $\pi$ is binary or does not contain a 0 to the right of its first letter greater than 1, then let $g(\pi)=\pi$.  Otherwise, consider the nondecreasing section $\pi'$ within $\pi$ of the form \eqref{1200form1} or \eqref{1200form2}.  We replace each (maximal) string $w^b$ in $\pi'$, where $w \geq 2$ and $b \geq 1$, with $w0^{b-1}$, and leave all other letters of $\pi$ undisturbed.  Let $g(\pi)$ denote the resulting ascent sequence, which is seen to avoid $1220$.  Combining the two cases above, one may verify that $g$ provides the desired bijection between $B_n(1200)$ and $B_n(1220)$, which completes the proof.
\end{proof}

\section{Further enumerative results}

Outside of the patterns covered in the prior section, the only other $\tau$ of length four such that avoidance of $\tau$ imposes an actual restriction on $B_n$ are given by $$\{0000,0001,0111,1002,1012,1023,1102,1110,1202,1203\}.$$
Note that in each of these patterns, no two positive letters are out of the natural order in going from left to right, and we find $f_\tau$  in this section for each such $\tau$. Taken together with the results from the prior section, one obtains a formula for $f_\tau$ in all cases where $\tau$ is of length four.

We will provide proofs only in the cases above where $\tau$ is binary (i.e., contains only 0's and 1's), and omit the proofs of the others, for the sake of brevity.  (Note that these proofs may be obtained in a comparable manner as those from the prior section, though most require different divisions of the various cases.)  The arguments presented below for the patterns $0111$ and $1110$ will make use of the kernel method.

We first find the generating function $f_{0000}(x)$.

\begin{theorem}\label{0000th}
We have
\begin{equation}\label{0000the1}
f_{0000}(x)=1+\frac{x(1-3x+3x^3+11x^4-3x^5-19x^6-9x^7+11x^8+14x^9+11x^{10}+4x^{11}+x^{12})}{(1-x-x^2-x^3)^5}.
\end{equation}
\end{theorem}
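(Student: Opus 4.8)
My plan is to first reduce the problem to a transparent structural description and then assemble the generating function by a finite case analysis organized around the positions of the (at most three) zeros.

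\textbf{Reduction.} Since $\pi$ contains $0000$ precisely when some value occurs four or more times, avoiding $\{021,0000\}$ is equivalent to requiring that the subsequence of positive letters be nondecreasing (the $021$-avoidance condition recalled in the Introduction) and that every value, including $0$, occur one, two, or three times; in particular $\pi$ has at most three $0$'s. Throughout I set $u=x+x^2+x^3$, the generating function recording the number of occurrences of a single fixed value. If $\pi$ is nondecreasing, then the ascent condition forces its distinct values to be exactly $0,1,\dots,k$ for some $k$ (no value may be skipped), so the nondecreasing members, together with $\varepsilon$, are counted by $\sum_{k\ge0}u^{k+1}+1=\frac{1}{1-x-x^2-x^3}$. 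It therefore remains to enumerate the members containing a descent, i.e.\ a $0$ occurring to the right of a positive letter.

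\textbf{Backbone and jump budget.} Writing such a $\pi$ as an alternating concatenation $0^{c_0}B_10^{c_1}B_2\cdots$ of zero-runs and maximal positive blocks, the bound of three $0$'s limits us to at most three zero-runs, hence to at most two \emph{interior} zero-runs. The positive letters form a single globally nondecreasing backbone, and one checks that an ascent raises $\max(\pi)$ except precisely when a zero-run is immediately followed by a positive block that re-enters at the current maximum; each such \emph{re-entry} increases $\mathrm{asc}-\max$ by one. Consequently the total number of jumps in the backbone is at most the number of re-entries, hence at most two. I will use the $1$-$3$-multiplicity analogue of the functions $h_r$ from Lemma~\ref{1000lem}: replacing the per-value factor $\frac{x}{1-x}$ by $u$ in that argument shows that nondecreasing blocks beginning at a prescribed value and containing at most $r$ jumps are counted by $\frac{u}{(1-u)^{r+1}}$. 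The only new bookkeeping occurs at a value shared by two consecutive blocks: occurring at least once in each block and at most three times in total, it contributes $x^2+2x^3$ (and $x^3$ in the degenerate case where a single value is shared among three blocks).

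\textbf{Assembly.} I would then sum over the number of zero-runs ($1$, $2$, or $3$), over whether the final zero-run is trailing, and over whether each interior block re-enters or climbs to a fresh maximum, in each configuration multiplying the per-value factors $u$, the shared-value factors $x^2+2x^3$, the zero-run weights, and the number of ways of distributing the admissible jumps among the (unboundedly many) gaps of the subsequent blocks. The richest configuration has two re-entries: it contributes one free geometric sum $\frac{1}{1-u}$ for the initial climb below the first re-entry, together with up to two jumps spread over the gaps of the two following blocks, and the cross term in which one jump lands in each block produces the extreme factor $\frac{1}{(1-u)^5}$, which accounts for the fifth power of $1-x-x^2-x^3$ in \eqref{0000the1}. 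Reducing every contribution over the common denominator $(1-x-x^2-x^3)^5$ and adding the nondecreasing part $\frac{1}{1-x-x^2-x^3}$ yields the degree-$12$ numerator.

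\textbf{Main obstacle.} Deriving the closed form for nondecreasing blocks is routine once Lemma~\ref{1000lem} is in hand, and the appearance of the fifth power is forced by the two-re-entry configuration as above. The genuine difficulty is the combinatorial bookkeeping: correctly attributing multiplicities to values shared across block boundaries so that each value is weighted by $u$ exactly once, coupling the jump budgets across blocks (the constraint is the asymmetric pair $j_2\le1$ and $j_2+j_3\le2$ rather than a simple cap of two on the total), and then carrying out without error the sizeable algebraic simplification that collapses all of the case contributions to the stated rational function.
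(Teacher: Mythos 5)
Your reduction and structural setup coincide with the paper's own proof of Theorem \ref{0000th}: the paper likewise observes that avoiding $0000$ caps every value at three occurrences, organizes the argument by the number of runs of $0$ (one, two, or three), weights each value by $x+x^2+x^3$ and each value shared across a block boundary by $x^2+2x^3$, and splits each configuration according to whether a positive block re-enters at the previous maximum ($b=a$, $d=c$) or starts at a fresh value ($b=a+1$, or $d=c+1$, $c+2$), with the number of admissible jumps in the later blocks governed by the running count of re-entries exactly as you describe. Your closed form $\frac{u}{(1-u)^{r+1}}$ for a nonempty block with at most $r$ jumps and multiplicities capped at three is correct, and the $\left(1-x-x^2-x^3\right)^5$ denominator does arise only in the two-re-entry configuration (the paper's $d=c$, $|\beta|>1$ subcases, which contribute $\frac{2x^7(1+2x)^2}{\delta^5}$).

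The gap is that your argument stops where the paper's begins in earnest: none of the configurations is actually enumerated. The two- and three-zero-run cases break into roughly a dozen subcases ($\beta$ or $\gamma$ empty or not; $b=a$ versus $b=a+1$; $d=c$, $c+1$, or $c+2$; $|\beta|=1$ versus $|\beta|>1$; $\beta$ with or without a jump), each yielding its own rational contribution, and the theorem is precisely the assertion that these sum to the stated degree-$12$ numerator over $\delta^5$. You correctly flag the delicate points — the asymmetric jump budget $j_2\le1$, $j_2+j_3\le2$; the degenerate case $a=b=c=d$ forcing $|\beta|=1$ (since $\beta$ constant of length $\ge 2$ would force four occurrences of that value); the triple-shared value contributing $x^3$ — but identifying them is not the same as resolving them. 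As written, this is a faithful and accurate outline of the paper's argument rather than a proof of \eqref{0000the1}; to complete it you would need to list the configurations exhaustively, verify that each member of $B(0000)$ with a descent falls into exactly one, and carry the resulting rational functions through to the common denominator.
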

\begin{proof}
Let $\pi \in B(0000)$ and we consider cases based on the number of runs of 0 within $\pi$.  Let $\delta=1-x-x^2-x^3$. If $\pi$ contains one run of zero (or is empty), then the contribution towards $f_{0000}$ is seen to be $\frac{1}{\delta}$, as $\pi$ is a staircase word in this case where each run of a letter has length at most three.  If $\pi$ has two runs of 0, then we have $\pi=00\alpha0\beta$, $0\alpha00\beta$ or $0\alpha0\beta$, where $\alpha \neq \varnothing$ and $\alpha,\beta$ do not contain 0.  If $\beta=\varnothing$, then we get a contribution of $(x^2+2x^3)\cdot\frac{1-\delta}{\delta}=\frac{x^3(1+2x)(1+x+x^2)}{\delta}$, where the $x^2+2x^3$ factor accounts for the 0 entries and $\frac{1-\delta}{\delta}=\frac{x(1+x+x^2)}{1-x-x^2-x^3}$ for the section $\alpha$.
So assume $\beta\neq \varnothing$ and let $a=\max(\alpha)$ and $b=\min(\beta)$.  If $b=a+1$, then, by similar reasoning, we get a contribution of $(x^2+2x^3)\left(\frac{1-\delta}{\delta}\right)^2=\frac{x^4(1+2x)(1+x+x^2)^2}{\delta^2}$.
On the other hand, if $b=a$, then $\beta$ may contain a jump.  Further, if $a$ occurs once in $\alpha$, then it may occur once or twice in $\beta$, whereas if $a$ occurs twice in $\alpha$, it must occur once in $\beta$.  Note $\beta$ is then accounted for by $\frac{x(1+x)}{\delta^2}$ in the former case, and by $\frac{x}{\delta^2}$ in the latter, where the extra factor of $\frac{1}{\delta}$ arises due to $\beta$ possibly containing a jump.  Thus, $\pi$ containing two runs of 0 for which $b=a$ are enumerated by
$$x^2(1+2x)\cdot\frac{x}{\delta}\cdot\frac{x(1+x)}{\delta^2}+x^2(1+2x)\cdot\frac{x^2}{\delta}\cdot\frac{x}{\delta^2}=\frac{x^4(1+2x)^2}{\delta^3}.$$

So assume henceforth $\pi$ contains three runs of 0, and thus we have $\pi=0\alpha0\beta0\gamma$, with $\alpha,\beta \neq \varnothing$ and $\alpha,\beta,\gamma$ not containing 0.  If $\gamma=\varnothing$, then considering cases on whether $b=a$ or $b=a+1$ yields a contribution towards $f_{0000}$ of
$$x^3\cdot\frac{x^2+2x^3}{\delta^2}\cdot\left(1+\frac{1-\delta}{\delta}\right)+x^3\cdot\left(\frac{1-\delta}{\delta}\right)^2=\frac{x^5(1+2x)}{\delta^3}+\frac{x^5(1+x+x^2)^2}{\delta^2},$$
where the $x^2+2x^3$ factor in the first case is attributed to $\alpha$ and $\beta$ sharing the letter $a$.  So assume hereafter $\gamma\neq \varnothing$ and let $c=\max(\beta)$, $d=\min(\gamma)$.  We will consider several cases based on the relative sizes of $a,b,c,d$ as follows. If $b=a+1$ and $d=c+1$, then $\pi$ contains no jumps and we get a contribution of $\left(\frac{x(1-\delta)}{\delta}\right)^3=\frac{x^6(1+x+x^2)^3}{\delta^3}$, as $\alpha,\beta,\gamma$ are mutually disjoint in this case.  If $b=a+1$ and $d=c$, then $\gamma$ may contain a jump, and we get $$x^3\cdot\left(\frac{1-\delta}{\delta}\right)\cdot\frac{x^2+2x^3}{\delta^2}\cdot\frac{1}{\delta}=\frac{x^6(1+2x)(1+x+x^2)}{\delta^4},$$
where the $x^2+2x^3$ accounts for the shared letter between $\beta$ and $\gamma$ and the final $\frac{1}{\delta}$ factor for the possible jump within $\gamma$.

So assume henceforth $b=a$.  If $d=c+1$, then $\beta$ or $\gamma$ may contain a jump, but not both, and considering cases on whether or not $\beta$ contains a jump yields
$$x^3\cdot\frac{(x^2+2x^3)(1-\delta)}{\delta^3}\cdot \frac{1-\delta}{\delta}+x^3\cdot\frac{x^2+2x^3}{\delta^2}\cdot\frac{1-\delta}{\delta^2}=\frac{x^6(1+2x)(1+x+x^2)(1+x+x^2+x^3)}{\delta^4},$$
where the last factor in either case is due to $\gamma$. If $d=c+2$, then $\beta$ has no jump and $\gamma$ is a staircase word, which yields
$$x^3\cdot\frac{x^2+2x^3}{\delta^2}\cdot\frac{1-\delta}{\delta}=\frac{x^6(1+2x)(1+x+x^2)}{\delta^3}.$$

Finally, suppose $d=c$, in which case $\gamma$ can have at most one jump if $\beta$ has a jump and at most two jumps if $\beta$ does not.  Note that if a nondecreasing word $w=w_1\cdots w_m$ contains exactly two jumps, then either there exist two indices $j \in [m-1]$ such that $w_{j+1}=w_j+2$ or one $j$  such that $w_{j+1}=w_j+3$.  We consider further subcases based on the length of $\beta$.  If $|\beta|=1$, then $a=b=d$ and it is seen that there is a contribution of
$$\frac{x^6}{\delta^2}\left(1+2\left(\frac{1-\delta}{\delta}\right)+\left(\frac{1-\delta}{\delta}\right)^2\right)=\frac{x^6}{\delta^4},$$
where the parenthesized factor accounts for the various possibilities concerning the jumps within $\gamma$ of which there are at most two.  So assume $|\beta|>1$, and hence $\beta$ must contain at least two distinct letters as $a=b$ and $c=d$. If $\beta$ has no jump, then we get
$$x^3\cdot\frac{x^2+2x^3}{\delta^2}\cdot\frac{x^2+2x^3}{\delta}\cdot\left(1+\frac{1-\delta}{\delta}\right)^2=\frac{x^7(1+2x)^2}{\delta^5},$$
where the $\frac{x^2+2x^3}{\delta}$ factor accounts for the choice (and positions) of $c=d$ along with the initial section of $\gamma$ prior to any jumps.  On the other hand, if $\beta$ has a jump, then we get
$$x^3\cdot\left(\frac{x^2+2x^3}{\delta^2}\right)^2\cdot\frac{1}{\delta}=\frac{x^7(1+2x)^2}{\delta^5},$$
where one of the $\frac{x^2+2x^3}{\delta^2}$ factors accounts for $\alpha$ and the first part of $\beta$ (prior to the jump) and the other for the remaining section of $\beta$ and the first part of $\gamma$ (prior to its possible jump).

Combining all of the cases above for $\pi$ then gives
\begin{align*}
f_{0000}&=\frac{1}{\delta}+\frac{x^3(1+2x)(1+x+x^2)+x^5(1+x+x^2)^2}{\delta^2}\\
&\quad+\frac{x^4(1+2x)(1+3x)+x^6(1+2x)(1+x+x^2)+x^6(1+x+x^2)^3}{\delta^3}\\
&\quad+\frac{x^6+x^6(1+2x)(1+x+x^2)(2+x+x^2+x^3)}{\delta^4}+\frac{2x^7(1+2x)^2}{\delta^5},
\end{align*}
which may be rewritten as \eqref{0000the1}.
\end{proof}

\begin{theorem}\label{0001th}
We have
\begin{equation}\label{000the1}
f_{0001}(x)=\frac{1-5x+8x^2-10x^4+5x^5+x^6+x^8}{(1-x)^3(1-x-x^2)^3}.
\end{equation}
\end{theorem}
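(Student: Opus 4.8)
The plan is to follow the direct, decomposition-based method used for $f_{0000}$ in Theorem~\ref{0000th}, adapting every count to the pattern $0001$. I would first isolate the structural characterization that makes the casework finite. Since $\pi\in B(0001)$ lies in $A_n(021)$, its subsequence of positive letters is nondecreasing; and an occurrence of $0001$ is a value repeated three times followed by a strictly larger value, so avoiding $0001$ says precisely that no letter occurring at least three times is ever followed by a larger letter. Feeding the nondecreasing-positives condition into this, I would establish three facts: (i) every positive value other than $\max(\pi)$ occurs at most twice, since a smaller positive value repeated three times is always followed (in the nondecreasing positive part) by a larger letter; (ii) the largest letter of $\pi$ may repeat without restriction and is the only positive value permitted to occur three or more times; and (iii) if $0$ occurs at least three times, then no positive letter may follow the third $0$. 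A short consequence of (iii) is that $\pi$ has at most three maximal runs of $0$, which is what bounds the casework.

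With that in hand I would organize the computation by the number of runs of $0$ --- one, two, or three --- exactly as in the $f_{0000}$ argument, and within each of these by how the largest letter is distributed. The recurring building blocks are: a factor $\frac{x}{1-x}$ (or $\frac{1}{1-x}$ when emptiness is allowed) for the unrestricted terminal block of the maximum letter; a factor $x+x^2$ for each non-maximal value, since by (i) such a value contributes a run of length one or two, so that a maximal nondecreasing stretch of distinct capped values is enumerated by $\frac{1}{1-x-x^2}$; and, where successive distinct values are not consecutive integers, the jump-counting generating functions $h_r$ of Lemma~\ref{1000lem}. It is the first two of these that produce the factors $(1-x)$ and $(1-x-x^2)$ appearing to the third power in the denominator of \eqref{000the1}.

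Within each zero-run case I would further split according to whether a value is shared across a run of $0$ --- that is, whether the largest letter appearing before a given run of $0$ equals the smallest letter appearing after it, or is strictly smaller --- just as the $f_{0000}$ proof splits on $b=a$ versus $b=a+1$ for $a=\max(\alpha)$ and $b=\min(\beta)$. This matters because the occurrences of a straddling value accumulate across the run of $0$, and by (i) they must not reach three when a larger letter is to follow; the repetition threshold is simply lowered from four (in the $0000$ case) to three here. For each resulting configuration I would write the contribution as a product of the building-block generating functions above, taking care to designate in each case exactly one block as the unrestricted maximal block so that nothing is counted twice.

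I expect the main obstacle to be the bookkeeping of this case analysis rather than any isolated idea: correctly interleaving the asymmetric constraints on $0$ (which, by (iii), may accumulate only as a trailing run) and on the positive values (capped at two unless maximal), tracking the possible jumps between successive distinct values through Lemma~\ref{1000lem}, and keeping the shared-boundary values straight across the one-, two-, and three-run cases. Once all contributions are assembled over a common denominator, I anticipate substantial cancellation collapsing the expression to the stated rational function \eqref{000the1}, mirroring the simplification at the end of the $f_{0000}$ computation.
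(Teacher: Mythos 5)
Your plan is essentially the paper's own proof: the paper uses exactly your structural facts (non-maximal positive letters occur at most twice, at most two zeros may precede the last positive letter), organizes the casework by the zero-runs preceding positive content (the forms $0\alpha0^r$, $00\alpha0^r$ and $0\alpha0\beta0^r$, with a further split on whether a value straddles the middle zero), and assembles contributions from the same building blocks $\tfrac{1}{1-x-x^2}$ for capped non-maximal runs and $\tfrac{x}{1-x}$ for the unrestricted terminal run of the maximum. The one detail to fix is your appeal to Lemma~\ref{1000lem}: $h_r$ as defined there allows unrestricted run lengths and so would overcount here; since the pjum value of a $\{021,0001\}$-avoider never exceeds $1$, the paper instead enumerates the single possible jump explicitly (its subcase $\beta=k(k+1)^{a_{k+1}}\cdots \ell^{a_\ell}(\ell+2)^{a_{\ell+2}}\cdots m^{a_m}$), keeping the $x+x^2$ cap on every non-maximal value, and you would need to do the same.
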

\begin{proof}
The all-zero sequences, together with $\varepsilon$, contribute $\frac{1}{1-x}$ towards the generating function, so assume $\pi \in B(0001)$ contains 1.  Then $\pi$ avoiding $0001$ implies it contains one or two runs of 0 outside of a possible terminal run of 0.  If it contains one such run of 0, then we have $\pi=0\alpha0^r$ or $00\alpha0^r$, where $r \geq0$ and $\alpha$ nonempty does not contain 0. Note that $\pi$ avoiding $021$ and $0001$ implies $\alpha=1^{a_1}\cdots(k-1)^{a_{k-1}}k^{a_k}$ for some $k \geq1$, with $a_i=1$ or $2$ for $1 \leq i \leq k-1$ and $a_k \geq 1$.  Thus, $\pi$ of the stated form are enumerated by
$$\frac{x(1+x)}{1-x}\cdot\frac{1}{1-x-x^2}\cdot\frac{x}{1-x}=\frac{x^2(1+x)}{(1-x)^2(1-x-x^2)},$$
where the factor $\frac{1}{1-x-x^2}$ accounts for the string $1^{a_1}\cdots (k-1)^{a_{k-1}}$ within $\alpha$, the factor $\frac{x(1+x)}{1-x}$ for all of the zeros of $\pi$ and $\frac{x}{1-x}$ for the string $k^{a_k}$.

Otherwise, we have $\pi=0\alpha0\beta0^r$, where $r \geq0$ and $\alpha,\beta$ nonempty do not contain 0.  We consider cases for $\pi$ based on whether $\beta$ contains one or more than one distinct letter as follows:
\begin{align*}
&\bullet\alpha=1^{a_1}\cdots(k-1)^{a_{k-1}}k^s,\,\beta=k^t, \text{ with } s,t \geq 1,\\
&\bullet\alpha=1^{a_1}\cdots(k-1)^{a_{k-1}}k,\,\beta=k\rho,\,\text{ with } \rho\neq\varnothing \text{ a staircase word starting with } k+1 \text{ or } k+2,\\
&\bullet\alpha=1^{a_1}\cdots(k-1)^{a_{k-1}}k,\,\beta=k(k+1)^{a_{k+1}}\cdots \ell^{a_\ell}(\ell+2)^{a_{\ell+2}}\cdots m^{a_m},\,\text{ with } k<\ell<m-1,\\
&\bullet\alpha=1^{a_1}\cdots k^{a_k},\, \beta=(k+1)^{a_{k+1}}\cdots \ell^{a_\ell}, \text{ with } k<\ell,
\end{align*}
where $k \geq 1$ in each case, with $a_i=1$ or 2 if $i$ is not the largest letter of $\pi$ and $a_i\geq1$ unrestricted if $i$ is the largest letter.  These four cases above for $\pi$ are seen to make respective contributions towards $f_{0001}$ of $\frac{x^4}{(1-x)^3(1-x-x^2)}$, $\frac{2x^5}{(1-x)^2(1-x-x^2)^2}$, $\frac{x^6(1+x)}{(1-x)^2(1-x-x^2)^3}$ and $\frac{x^4(1+x)}{(1-x)^2(1-x-x^2)^2}$.
Combining all of the prior cases for $\pi$ implies
\begin{align*}
f_{0001}&=\frac{1}{1-x}+\frac{x^2(1+x)}{(1-x)^2(1-x-x^2)}+\frac{x^4}{(1-x)^3(1-x-x^2)}+ \frac{2x^5}{(1-x)^2(1-x-x^2)^2}\\
&\quad+\frac{x^6(1+x)}{(1-x)^2(1-x-x^2)^3}+\frac{x^4(1+x)}{(1-x)^2(1-x-x^2)^2},
\end{align*}
which yields \eqref{000the1}.
\end{proof}

Let $b_n=b_n(0111)$ for $n \geq1$.  To determine $b_n$, we consider a refinement of the sequence as follows.  Let $b_{n,i}$ for $n \geq 1$ and $0 \leq i \leq n-1$ denote the number of $\pi \in B_n(0111)$ such that $\text{pjum}(\pi)=i$, where pjum is defined as before.  Put $b_{n,i}=0$ if $i \geq n$ or $i<0$.

The $b_{n,i}$ are given recursively as follows.

\begin{lemma}\label{0111lem1}
If $n \geq 3$ and $0 \leq i \leq n-3$, then
\begin{equation}\label{0111lem1e1}
b_{n,i}=b_{n-1,i}+\sum_{j=i}^{n-3}(b_{n-1,j}+b_{n-2,j})+\delta_{i>0}\sum_{d=1}^{n-i-2}\sum_{j=i-1}^{n-d-3}b_{n-d-2,j},
\end{equation}
with $b_{1,0}=1$, $b_{2,0}=2$, $b_{2,1}=0$ and $b_{n,n-1}=b_{n,n-2}=0$ for all $n \geq 3$.
\end{lemma}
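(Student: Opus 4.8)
The plan is to first pin down the structure of the members of $B_n(0111)$ and then derive the recurrence by classifying $\pi\in B_n(0111)$ according to its final letter and, when that letter is positive, the number of times the maximum occurs. The key preliminary observation is that, since $\pi$ begins with $0$ and avoids $021$ (so that its positive letters form a nondecreasing subsequence), an occurrence of $0111$ exists precisely when some positive value occurs at least three times, the leading $0$ supplying the role of the ``$0$'' in the pattern. Hence $B_n(0111)$ consists exactly of those ascent sequences whose positive letters are nondecreasing and in which each positive value appears at most twice. I would also record at the outset that deleting the last letter, or appending a letter that is either $0$ or a brand-new maximum, preserves both avoidance conditions, so that the deletion maps used below are genuine bijections onto the indicated subsets.

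Next I would treat the easy case. If $\pi$ ends in $0$, deleting that $0$ changes neither $\text{asc}$ nor $\max$ (the step into a terminal $0$ is never an ascent), so $\text{pjum}$ is unchanged; this yields the term $b_{n-1,i}$. If $\pi$ ends in a positive letter, that letter equals $m=\max(\pi)$ because the positive letters are nondecreasing. Suppose first that $m$ occurs only once. Writing $\pi=\pi' m$, deletion of $m$ removes exactly one ascent, so $\text{asc}(\pi')=\text{asc}(\pi)-1$ while $\max(\pi')=m'$ for some $m'\le m-1$; thus $\text{pjum}(\pi')=i+(m-1-m')\ge i$. Conversely, given $\pi'$ of length $n-1$ with $\text{pjum}(\pi')=j$, appending a single new maximum $s$ produces $\text{pjum}=i$ for the unique choice $s=\text{asc}(\pi')+1-i$, and this $s$ exceeds $\max(\pi')$ exactly when $j\ge i$. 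This accounts for $\sum_{j\ge i}b_{n-1,j}$, which after truncation using the stated boundary values $b_{N,N-1}=b_{N,N-2}=0$ equals $\sum_{j=i}^{n-3}b_{n-1,j}$.

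The heart of the argument is the case in which $m$ occurs twice. Because the positive letters are nondecreasing and $m$ is the maximum, both copies of $m$ are the last two positive letters, everything between them is $0$, and the second copy is the final letter; hence $\pi=\beta\, m\, 0^{d}\, m$ with $d\ge 0$ and $\beta=\pi''$ a prefix of length $n-d-2$. When $d=0$ the two $m$'s are adjacent, the step between them is a level, and a computation parallel to the single-occurrence case (append $ss$ to $\pi''$ with $s=\text{asc}(\pi'')+1-i$) gives the contribution $\sum_{j\ge i}b_{n-2,j}=\sum_{j=i}^{n-3}b_{n-2,j}$; combining this with the previous paragraph produces the full second summand $\sum_{j=i}^{n-3}(b_{n-1,j}+b_{n-2,j})$. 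When $d\ge 1$ the inserted $0$'s create one additional ascent, so $\text{asc}(\pi)=\text{asc}(\pi'')+2$ and the target value is $s=\text{asc}(\pi'')+2-i$; here the ascent-sequence validity constraint $s\le \text{asc}(\pi'')+1$ forces $i\ge 1$ (explaining the factor $\delta_{i>0}$), while the requirement $s>\max(\pi'')$ relaxes to $\text{pjum}(\pi'')\ge i-1$ (explaining the lower index $j=i-1$). Summing over the block length $d\ge 1$ and over $j$ then yields the third summand.

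Finally I would dispose of the boundary data. The values $b_{1,0}=1$, $b_{2,0}=2$, $b_{2,1}=0$ are immediate by listing the short sequences. For $b_{n,n-1}=b_{n,n-2}=0$ it suffices to show $\text{pjum}(\pi)\le n-3$ for $n\ge 3$: if $\max(\pi)\ge 2$ this follows at once from $\text{asc}(\pi)\le n-1$, while if $\max(\pi)\le 1$ the at-most-two-copies condition caps the number of ascents at $2$ and a short direct check settles the remaining small lengths. I expect the main obstacle to be the careful bookkeeping in the twice-occurring case: verifying that appending a new maximum (once, twice adjacently, or twice separated by a block of $0$'s) is uniquely reversible, and correctly tracking how the validity bound $s\le\text{asc}(\pi'')+1$ produces precisely the shift to $j=i-1$ together with the $\delta_{i>0}$ restriction, rather than the naive $j\ge i$ that one might first expect.
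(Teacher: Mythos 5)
Your proof is correct and follows essentially the same route as the paper: classify $\pi$ by its last letter and, when that letter is positive (hence the maximum, occurring once or twice by $0111$-avoidance), by whether the two occurrences are adjacent or separated by a block of $0$'s, tracking $\mathrm{pjum}$ through each deletion. Your grouping ($m$ once vs.\ twice, then $d=0$ vs.\ $d\geq 1$) matches the paper's (single run $\pi'k$ or $\pi'kk$ vs.\ $\pi'k0^dk$), and your explicit derivation of the constraint forcing $i\geq 1$ and the shift to $j\geq i-1$ is just a more detailed account of what the paper states tersely.
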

\begin{proof}
The initial conditions for $n=1,2$ are apparent, and one may verify the boundary conditions for $i=n-2$ and $i=n-1$.  Let $\mathcal{B}_{n,i}$ denote the subset of $B_n(0111)$ enumerated by $b_{n,i}$.  Suppose $\pi \in \mathcal{B}_{n,i}$ for some $n \geq 3$ and $0 \leq i \leq n-3$.  There are clearly $b_{n-1,i}$ members of $\mathcal{B}_{n,i}$ ending in 0, so assume $\pi$ ends in $k$ for some $k>0$.  Then $\pi$ avoiding 021 implies $k=\max(\pi)$ and $\pi$ avoiding $0111$ implies $k$ appears once or twice.  If $k$ appears as a single run, then $\pi=\pi'k$ or $\pi'kk$, where $\text{pjum}(\pi')=j$ for some $i \leq j \leq n-3$.
Note that $k$ is uniquely determined by $i$ and $j$, together with the largest letter of $\pi'$. Thus, considering all possible $j$ implies that the number of members of $\mathcal{B}_{n,i}$ whose last letter is positive and occurs as a single run is given by $\sum_{j=i}^{n-3}(b_{n-1,j}+b_{n-2,j})$.

Otherwise $\pi=\pi'k0^dk$ for some $d \geq 1$.  Note that we must have $i>0$ in this case due to $k$ occurring twice as an ascent top. Then $\pi' \in \mathcal{B}_{n-d-2,j}$ for some $j \geq i-1$, with $k$ again being determined by $i$, $j$ and $\max(\pi')$.  Considering all possible $d$ and $j$ thus yields the final summation expression in \eqref{0111lem1e1}, where the $\delta_{i>0}$ factor arises from the requirement that $i$ must be positive in order for there to be a nonzero contribution in this case.  Combining the prior cases of $\pi$ above then implies \eqref{0111lem1e1} and completes the proof.
\end{proof}

Define the distribution polynomial $b_n(v)=\sum_{i=0}^{n-1}b_{n,i}v^i$ for $n \geq 1$ and its generating function $B(x,v)=\sum_{n\geq1}b_n(v)x^n$.  Then $B(x,v)$ satisfies the following functional equation.

\begin{lemma}\label{0111lem2}
We have
\begin{equation}\label{0111lem2e1}
\left(1-x-v+2xv+x^2v+\frac{x^3v^2}{1-x}\right)B(x,v)=x(1-v)+x\left(1+x+\frac{x^2v}{1-x}\right)B(x,1).
\end{equation}
\end{lemma}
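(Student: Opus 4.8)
The plan is to convert the scalar recurrence \eqref{0111lem1e1} into the claimed functional equation by the standard two-step generating-function transform: first multiply by $v^i$ and sum over $i$ to obtain a recurrence for the polynomials $b_n(v)$, then multiply by $x^n$ and sum over $n$ to pass to $B(x,v)$. The essential simplification I would exploit is that each ``tail sum'' appearing in \eqref{0111lem1e1} collapses neatly once it is weighted by $v^i$.

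First I would fix $n\ge 3$, multiply \eqref{0111lem1e1} by $v^i$, and sum over $0\le i\le n-3$; the terms $i=n-2,n-1$ contribute nothing by the boundary conditions $b_{n,n-1}=b_{n,n-2}=0$, so the left side is exactly $b_n(v)$. The term $b_{n-1,i}$ sums to $b_{n-1}(v)$. For the tail sum $\sum_{j=i}^{n-3}b_{n-1,j}$ I would interchange the order of summation and use $\sum_{i=0}^{j}v^i=\frac{1-v^{j+1}}{1-v}$; since $b_{n-1,j}=0$ for $j>n-4$, the partial sums $\sum_{j=0}^{n-3}$ may be promoted to the full polynomial evaluations, giving
$$\sum_{i=0}^{n-3}v^i\sum_{j=i}^{n-3}b_{n-1,j}=\frac{1}{1-v}\bigl(b_{n-1}(1)-v\,b_{n-1}(v)\bigr),$$
and likewise with $n-1$ replaced by $n-2$. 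For the final double sum of \eqref{0111lem1e1} I would first reindex by $m=n-d-2$, which turns $\sum_{d=1}^{n-i-2}\sum_{j=i-1}^{n-d-3}$ into $\sum_{m=i}^{n-3}\sum_{j=i-1}^{m-1}$; weighting by $v^i$, summing over $i\ge 1$, and applying the same interchange-and-geometric-series step (now with $\sum_{i=1}^{j+1}v^i$) yields $\frac{v}{1-v}\sum_{m=1}^{n-3}\bigl(b_m(1)-v\,b_m(v)\bigr)$. Collecting these pieces produces a clean recurrence for $b_n(v)$ valid for all $n\ge 3$.

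Next I would multiply that recurrence by $x^n$, sum over $n\ge 3$, and identify every resulting series with $B(x,v)$ or $B(x,1)$ after the appropriate index shift, subtracting off the low-order terms via $b_1(v)=1$ and $b_2(v)=2$. The shift-by-one and shift-by-two terms contribute factors of $x$ and $x^2$, while the outer sum $\sum_{m=1}^{n-3}$ in the last piece produces the factor $\frac{x^3}{1-x}$ after interchanging the order of summation and evaluating the geometric series $\sum_{n\ge m+3}x^n$. This leaves a single linear relation among $B(x,v)$, $B(x,1)$, and the polynomial correction $x+2x^2$ coming from the removed initial data.

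Finally, I would clear the common denominator $1-v$, move all $B(x,v)$ contributions to one side, and collect coefficients. The coefficient of $B(x,v)$ simplifies using $(1-v)-(1-v)x=(1-v)(1-x)=1-x-v+xv$ to exactly $1-x-v+2xv+x^2v+\frac{x^3v^2}{1-x}$; the coefficient of $B(x,1)$ collects to $x\bigl(1+x+\frac{x^2v}{1-x}\bigr)$; and the pure-$x$ terms telescope down to $x(1-v)$, giving \eqref{0111lem2e1}. I expect the reindexing $m=n-d-2$ in the triple-sum term, together with the careful use of the boundary conditions to promote partial sums to full polynomial evaluations, to be the only genuinely delicate step; everything after passing to $x^n$ is routine series manipulation and bookkeeping of the low-order corrections.
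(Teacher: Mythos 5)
Your proposal is correct and follows essentially the same route as the paper: multiply \eqref{0111lem1e1} by $v^i$, collapse the tail sums via geometric series to get a recurrence for $b_n(v)$, then pass to $B(x,v)$ by summing against $x^n$, with the triple sum producing the $\frac{x^3}{1-x}$ factor. The only differences are cosmetic bookkeeping (your reindexing $m=n-d-2$, and starting the $n$-sum at $3$ with explicit corrections $b_1(v)=1$, $b_2(v)=2$ rather than observing, as the paper does, that the recurrence already holds at $n=2$ with $b_0(v)=0$).
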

\begin{proof}
Multiplying both sides of \eqref{0111lem1e1} by $v^i$, and summing over all $0 \leq i \leq n-3$, gives
\begin{align}
b_n(v)&=\sum_{i=0}^{n-3}b_{n-1,i}v^i+\sum_{j=0}^{n-3}(b_{n-1,j}+b_{n-2,j})\sum_{i=0}^jv^i+\sum_{d=1}^{n-3}\sum_{i=1}^{n-3}v^i\sum_{j=i-1}^{n-d-3}b_{n-d-2,j}\notag\\
&=\sum_{i=0}^{n-3}b_{n-1,i}v^i+\sum_{j=0}^{n-3}(b_{n-1,j}+b_{n-2,j})\left(\frac{1-v^{j+1}}{1-v}\right)+\sum_{d=1}^{n-3}\sum_{j=0}^{n-d-3}b_{n-d-2,j}\sum_{i=1}^{j+1}v^i\notag\\
&=b_{n-1}(v)+\frac{1}{1-v}(b_{n-1}(1)+b_{n-2}(1)-vb_{n-1}(v)-vb_{n-2}(v))+\sum_{d=1}^{n-3}\sum_{j=0}^{n-d-3}b_{n-d-2,j}\left(\frac{v-v^{j+2}}{1-v}\right)\notag\\
&=\frac{1-2v}{1-v}b_{n-1}(v)+\frac{1}{1-v}(b_{n-1}(1)+b_{n-2}(1)-vb_{n-2}(v))\notag\\
&\quad+\frac{v}{1-v}\sum_{d=1}^{n-3}(b_{n-d-2}(1)-vb_{n-d-2}(v)), \qquad n \geq 3. \label{0111lem1e2}
\end{align}
Note that \eqref{0111lem1e2} is also seen to hold for $n=2$, upon taking $b_0(v)=0$.  Multiplying both sides of \eqref{0111lem1e2} by $x^n$, and summing over all $n \geq 2$, gives
\begin{align*}
B(x,v)-x&=\frac{x(1-2v)}{1-v}B(x,v)+\frac{x(1+x)}{1-v}B(x,1)-\frac{x^2v}{1-v}B(x,v)\\
&\quad+\frac{v}{1-v}\sum_{d\geq1}\sum_{n\geq d+3}(b_{n-d-2}(1)-vb_{n-d-2}(v))x^n\\
&=\frac{x(1+x)}{1-v}B(x,1)+\frac{x(1-2v-xv)}{1-v}B(x,v)+\frac{v}{1-v}\sum_{d\geq1}x^{d+2}(B(x,1)-vB(x,v))\\
&=\frac{x\left(1+x+\frac{x^2v}{1-x}\right)}{1-v}B(x,1)+\frac{x\left(1-2v-xv-\frac{x^2v^2}{1-x}\right)}{1-v}B(x,v),
\end{align*}
which may be rewritten as \eqref{0111lem2e1}.
\end{proof}

\begin{theorem}\label{0111th}
We have
\begin{equation}\label{0111the1}
f_{0111}(x)=\frac{(1-x)^2-\sqrt{1-4x+2x^2+x^4}}{2x^2}.
\end{equation}
\end{theorem}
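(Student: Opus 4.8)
The plan is to extract $B(x,1)$ from the functional equation \eqref{0111lem2e1} by the kernel method, in the same spirit as the treatment of $f_{1001}$ in Theorem \ref{1001th}, and then invoke $f_{0111}=1+B(x,1)$, which holds by the definitions. I would first set the coefficient of $B(x,v)$ on the left-hand side of \eqref{0111lem2e1} equal to zero, i.e.
\[
1-x-v+2xv+x^2v+\frac{x^3v^2}{1-x}=0.
\]
Clearing the denominator $1-x$ turns this into the quadratic
\[
x^3v^2-(1-3x+x^2+x^3)v+(1-x)^2=0 .
\]

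Next I would solve this quadratic for $v$. A short computation shows that its discriminant factors as
\[
(1-3x+x^2+x^3)^2-4x^3(1-x)^2=(1-x)^2\,(1-4x+2x^2+x^4),
\]
so the two roots are $v=\dfrac{(1-3x+x^2+x^3)\pm(1-x)\sqrt{1-4x+2x^2+x^4}}{2x^3}$. I would take the branch $\widetilde v$ with the minus sign; expanding in powers of $x$ shows $\widetilde v=1+O(x)$ is a genuine formal power series, whereas the other root blows up like $x^{-3}$, so only for $\widetilde v$ is $B(x,\widetilde v)$ well-defined and the substitution $v=\widetilde v$ a legitimate way to annihilate the left-hand side of \eqref{0111lem2e1}.

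Substituting $v=\widetilde v$ then yields $0=x(1-\widetilde v)+x\bigl(1+x+\frac{x^2\widetilde v}{1-x}\bigr)B(x,1)$, which I solve to obtain
\[
B(x,1)=\frac{(1-x)(\widetilde v-1)}{1-x^2+x^2\widetilde v}.
\]
Inserting the explicit $\widetilde v$ and simplifying, I would rationalize by multiplying numerator and denominator by the conjugate $1+x^2+\sqrt{1-4x+2x^2+x^4}$; the engine of the simplification is the identity $(1+x^2)^2-(1-4x+2x^2+x^4)=4x$, which collapses the denominator to a monomial and reduces $B(x,1)$ to $\frac{1-2x-x^2-\sqrt{1-4x+2x^2+x^4}}{2x^2}$. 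Adding $1$ and combining over the common denominator $2x^2$ then gives $\frac{(1-x)^2-\sqrt{1-4x+2x^2+x^4}}{2x^2}$, which is precisely \eqref{0111the1}.

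The main obstacle is twofold, though both parts are essentially bookkeeping rather than conceptual: first, correctly identifying which root of the kernel produces a power series (the branch choice), since the wrong choice gives an inadmissible substitution; and second, carrying out the radical simplification cleanly, where recognizing the factorization of the discriminant above and the conjugate identity $(1+x^2)^2-D=4x$ (with $D=1-4x+2x^2+x^4$) is what keeps the algebra manageable and prevents the answer from collapsing into an unwieldy nested expression.
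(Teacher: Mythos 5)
Your proposal is correct and follows essentially the same route as the paper: apply the kernel method to \eqref{0111lem2e1}, note that the kernel vanishes when $x^3v^2-(1-x)(1-2x-x^2)v+(1-x)^2=0$ (your $1-3x+x^2+x^3$ is exactly $(1-x)(1-2x-x^2)$), choose the power-series root, solve for $B(x,1)$, and add $1$. The only cosmetic difference is in the final simplification, where the paper observes directly that $B(x,1)=\frac{x}{1-x}v_0$ while you rationalize by the conjugate $1+x^2+\sqrt{1-4x+2x^2+x^4}$; both yield $B(x,1)=\frac{1-2x-x^2-\sqrt{1-4x+2x^2+x^4}}{2x^2}$ and hence \eqref{0111the1}.
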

\begin{proof}
We apply the kernel method and let $v=v_0$ in \eqref{0111lem2e1}, where $v_0$ satisfies
$$x^3v_0^2-(1-x)(1-2x-x^2)v_0+(1-x)^2=0.$$
This cancels out the left-hand side of \eqref{0111lem2e1} and solving for $B(x,1)$ implies
$$B(x,1)=\frac{(1-x)(v_0-1)}{1-x^2+x^2v_0}=\frac{x}{1-x}v_0,$$
with $v_0$ given by
$$v_0=\frac{(1-x)\left(1-2x-x^2-\sqrt{1-4x+2x^2+x^4}\right)}{2x^3}.$$
Note that we have chosen the negative root for $v_0$ since we seek a power series solution to $B(x,1)$.
Formula \eqref{0111the1} now follows from the fact $f_{0111}=1+B(x,1)$.
\end{proof}

\noindent \emph{Remark:}  It is seen, upon comparing generating functions, that $b_{n-1}=A082582(n)$ for all $n \geq1$.  Hence, we have an apparently new combinatorial interpretation of A082582, which is known already to enumerate such discrete structures as Dyck paths of semi-length $n$ that avoid $UUDD$ or bargraphs with semi-perimeter $n$.

Our next result involving the pattern $1110$ will draw upon the preceding one in a fundamental way.

\begin{theorem}\label{1110th}
We have
\begin{equation}\label{1110the1}
f_{1110}(x)=\frac{1-5x+8x^2-5x^3+x^4-(1-3x+x^2+2x^3)\sqrt{1-4x+2x^2+x^4}}{2x^2(1-2x)^2}.
\end{equation}
\end{theorem}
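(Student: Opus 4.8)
The plan is to reduce the enumeration of $B_n(1110)$ to that of $B_n(0111)$, exploiting the $\text{pjum}$-refined generating function $B(x,v)$ and the functional equation \eqref{0111lem2e1} from the preceding analysis. The starting point is a structural description of $1110$-avoiders. Since every member of $B_n$ has a nondecreasing subsequence of positive letters, each descent must land on a $0$; consequently an occurrence of $1110$ is forced to have its final (smallest) letter equal to $0$. Writing $\pi = \pi'w$, where $\pi'$ is the prefix of $\pi$ ending at its last $0$ and $w$ is the (possibly empty) nondecreasing run of positive letters that follows, one checks that $\pi$ avoids $1110$ exactly when no positive letter occurs three or more times in $\pi'$. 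But avoiding $0111$ within $B_n$ is equivalent to the very same condition (using the initial $0$ as the small letter of a would-be $0111$), so $\pi'$ ranges precisely over the nonempty members of $B(0111)$ whose last letter is $0$, while $w$ is an arbitrary valid nondecreasing positive suffix subject only to $w_1 \geq \max(\pi')$.

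First I would translate the constraint on $\pi'$ into generating-function terms refined by $\text{pjum}$. Appending a $0$ to any member of $B(0111)$ leaves $\text{pjum}$ unchanged (it alters neither the number of ascents nor the maximum letter) and never creates a $0111$; conversely, deleting the terminal $0$ of such a $\pi'$ recovers an arbitrary member of $B(0111)$. Hence, letting $C(x,v)$ denote the generating function for nonempty $0111$-avoiders ending in $0$, weighted by length and by $v^{\text{pjum}}$, I obtain $C(x,v) = x\bigl(1+B(x,v)\bigr)$, where $B(x,v)$ is the bivariate generating function appearing in Lemma \ref{0111lem2}.

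Next I would enumerate the suffix $w$ as a function of $i = \text{pjum}(\pi')$. With $m = \max(\pi')$ and $\text{asc}(\pi') = m+i$, the first letter satisfies $w_1 \in [m,\,m+i+1]$, and the subsequent letters form a nondecreasing continuation whose admissible jumps are controlled by the running $\text{pjum}$ value; a short computation in the spirit of Lemma \ref{1000lem} shows that, when $m \geq 1$, the total contribution of all such $w$ equals $\left(\frac{1-x}{1-2x}\right)^{i+2}$. Writing $\rho = \frac{1-x}{1-2x}$ and summing over all admissible $\pi'$, the naive count is $1 + \rho^2 C(x,\rho) = 1 + x\rho^2\bigl(1+B(x,\rho)\bigr)$. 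To evaluate $B(x,\rho)$ I would substitute $v = \rho$ into \eqref{0111lem2e1}: the coefficient of $B(x,v)$ there collapses to $\frac{x^2(1-x)^2}{(1-2x)^2}$ at $v = \rho$, which lets me solve for $B(x,\rho)$ in terms of $B(x,1)$. After incorporating the boundary correction described below and inserting the closed form $B(x,1) = f_{0111}-1$ from \eqref{0111the1}, the whole expression collapses to $f_{1110} = 1 - \frac{x^2(1-x)}{(1-2x)^2} + \frac{1-x-x^2}{1-2x}\bigl(f_{0111}-1\bigr)$; substituting \eqref{0111the1} and simplifying then yields \eqref{1110the1}.

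The main obstacle, and the one place demanding care, is the boundary case $m = \max(\pi') = 0$, i.e.\ the all-zero prefixes $\pi' = 0^k$. For these the option $w_1 = m$ is illegitimate, since $w_1 = 0$ is not a positive letter and would instead belong to $\pi'$; the correct suffix factor is $\rho$, not $\rho^2$. Overlooking this inflates the count -- it would, for instance, wrongly produce three ascent sequences of length two -- so one must subtract the over-counted contribution $(\rho^2-\rho)\frac{x}{1-x} = \frac{x^2}{(1-2x)^2}$ before simplifying. Apart from this bookkeeping, the remaining work is purely the algebraic simplification triggered by the substitution $v = \rho$, which succeeds only because of the fortuitous factorization of the kernel coefficient recorded above.
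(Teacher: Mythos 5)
Your proof is correct, and it takes a genuinely different route from the paper's. The paper splits $B(1110)$ into $B(0111)$ plus its complement and analyzes the complement by locating the smallest positive letter occurring at least thrice, which leads to a three-way case analysis (forms (i)--(iii)) whose contributions are each expressed via $h_r$ and then assembled into a formula involving both $B(x,1)$ and $B\left(x,\frac{1-x}{1-2x}\right)$; the latter is then evaluated by substituting into the explicitly solved closed form of $B(x,v)$. You instead cut every $\pi$ at its last $0$, observing (correctly, since in a $021$-avoider the bottom of any $1110$ occurrence must literally be a $0$) that the prefix is exactly a nonempty $0111$-avoider ending in $0$ and the suffix is a nondecreasing positive word whose count depends only on $\mathrm{pjum}$ of the prefix; this replaces the paper's case analysis with a single uniform factor $\rho^{i+2}$, $\rho=\frac{1-x}{1-2x}$, corrected by $\frac{x^2}{(1-2x)^2}$ for the all-zero prefixes. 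Your key computations all check out: the suffix factor $1+\sum_{s=0}^{i+1}h_s=\rho^{i+2}$ (using the corrected form $h_r=\frac{x}{1-2x}\rho^{r}$, which is what Lemma \ref{1000lem}'s proof actually establishes), the identity $C(x,v)=x(1+B(x,v))$, the evaluation of the kernel of \eqref{0111lem2e1} at $v=\rho$ to $\frac{x^2(1-x)^2}{(1-2x)^2}$, and the resulting identity $f_{1110}=1-\frac{x^2(1-x)}{(1-2x)^2}+\frac{1-x-x^2}{1-2x}\bigl(f_{0111}-1\bigr)$, which does simplify to \eqref{1110the1}. What your approach buys is economy: one decomposition instead of three cases, and no need for the explicit closed form of $B(x,v)$ --- only the functional equation and $B(x,1)$ are used, since the kernel happily does not vanish at $v=\rho$. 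Interestingly, both arguments end up evaluating $B$ at the same point $\frac{1-x}{1-2x}$, which is forced by the $h_r$ geometric structure of the appended nondecreasing tails in either decomposition.
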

\begin{proof}
Suppose $\pi \in B(1110)-B(0111)$ and we consider the smallest $k>0$ which appears at least thrice.  Then $\pi$ has one of the following forms:
\begin{align*}
\text{(i)}&\, \pi=\pi'k^r\alpha, \quad r \geq 3,\\
\text{(ii)}&\,\pi=\pi'k0^ak^2\beta \text{ or } \pi=\pi'k^20^ak\beta, \quad a\geq1,\\
\text{(iii)}&\,\pi=\pi'k0^bk0^ck\gamma, \quad b,c \geq1,
\end{align*}
where $\pi'$ is a nonempty member of $B(0111)$, $k>\max(\pi')$ and $\alpha,\beta,\gamma$ are possibly empty.  Note $\pi$ avoiding $1110$  implies $\alpha,\beta,\gamma$ cannot contain $0$, and hence are nondecreasing.
Given $0 \leq i \leq j$, note that if $\text{pjum}(\pi')=j$, then $k>\max(\pi')$ may be chosen so that there are at most $j-i$ jumps within the nondecreasing section $k^{r-2}\alpha$ in $\pi$ of the form (i), and hence this section is accounted for by $h_{j-i}(x)$.  Similarly, there are at most $j-i+1$ and $j-i+2$ jumps in the terminal sections $k\beta$ and $k\gamma$ within $\pi$ of the forms (ii) or (iii).

Let $f_j(x)=[y^j]B(x,y)$ for $j\geq0$, where $B(x,y)$ is as defined above. Considering all possible $i$ and $j$, it is seen that $\pi$ of the form (i) are enumerated by
\begin{align*}
\sum_{j\geq0}\sum_{i=0}^jf_j(x)\cdot x^2h_{j-i}(x)&=\frac{x^3}{1-2x}\sum_{j\geq0}\sum_{i=0}^jf_j(x)\left(\frac{1-x}{1-2x}\right)^{j-i}=\frac{x^3}{1-2x}\sum_{j\geq0}f_j(x)\left(\frac{1-\left(\frac{1-x}{1-2x}\right)^{j+1}}{1-\frac{1-x}{1-2x}}\right)\\
&=x^2\sum_{j\geq0}f_j(x)\left(\left(\frac{1-x}{1-2x}\right)^{j+1}-1\right)=x^2\left(\frac{1-x}{1-2x}B\left(x,\frac{1-x}{1-2x}\right)-B(x,1)\right).
\end{align*}
Similarly, the contributions from $\pi$ of the forms (ii) or (iii) towards $f_{1110}$ are given respectively by
\begin{align*}
\sum_{j\geq0}\sum_{i=0}^jf_j(x)\cdot \frac{2x^3}{1-x}h_{j-i+1}(x)&=\frac{2x^4}{(1-2x)^2}\sum_{j\geq0}\sum_{i=0}^jf_j(x)\left(\frac{1-x}{1-2x}\right)^{j-i}\\
&=\frac{2x^3}{1-2x}\left(\frac{1-x}{1-2x}B\left(x,\frac{1-x}{1-2x}\right)-B(x,1)\right)
\end{align*}
and
\begin{align*}
\sum_{j\geq0}\sum_{i=0}^jf_j(x)\cdot \frac{x^4}{(1-x)^2}h_{j-i+2}(x)&=\frac{x^4}{(1-2x)^2}\left(\frac{1-x}{1-2x}B\left(x,\frac{1-x}{1-2x}\right)-B(x,1)\right).
\end{align*}
Combining the contributions from $\pi$ of the forms (i)--(iii) above implies members of $B(1110)-B(0111)$ are enumerated by
\begin{align*}
&x^2\left(1+\frac{2x}{1-2x}+\frac{x^2}{(1-2x)^2}\right)\left(\frac{1-x}{1-2x}B\left(x,\frac{1-x}{1-2x}\right)-B(x,1)\right)\\
&=\frac{x^2(1-x)^3}{(1-2x)^3}B\left(x,\frac{1-x}{1-2x}\right)-\frac{x^2(1-x)^2}{(1-2x)^2}B(x,1).
\end{align*}

Adding the contribution from members of $B(0111)$, namely, $1+B(x,1)$, we get
\begin{equation}\label{1110the2}
f_{1110}=1+\frac{1-4x+3x^2+2x^3-x^4}{(1-2x)^2}B(x,1)+\frac{x^2(1-x)^3}{(1-2x)^3}B\left(x,\frac{1-x}{1-2x}\right).
\end{equation}
Solving for $B(x,v)$ in \eqref{0111lem2e1}, and making use of the formula found above for $B(x,1)$, we have
$$B(x,v)=\frac{2x^2(1-x)(1-v)+(1-x^2+x^2v)(1-2x-x^2)-(1-x^2+x^2v)\sqrt{1-4x+2x^2+x^4}}{2x((1-x)^2-(1-x)(1-2x-x^2)v+x^3v^2)},$$
and hence
$$B\left(x,\frac{1-x}{1-2x}\right)=\frac{(1-2x)\left(1-3x+x^3+x^4-(1-x-x^2)\sqrt{1-4x+2x^2+x^4}\right)}{2x^3(1-x)^2}.$$
Substituting this formula into \eqref{1110the2}, and simplifying, leads to \eqref{1110the1} after several algebraic steps.
\end{proof}

We conclude by stating without proof the formulas for $f_\tau$ in the remaining cases of $\tau$.

\begin{theorem}\label{other}
We have
\begin{align*}
f_{1002}(x)&=\frac{1-9x+34x^2-69x^3+81x^4-54x^5+16x^6+x^7}{(1-x)^4(1-2x)^3},\\
f_{1012}(x)&=\frac{1-5x+8x^2-4x^3+x^4}{(1-x)(1-2x)(1-3x+x^2)},\\
f_{1023}(x)&=\frac{1-8x+26x^2-43x^3+38x^4-16x^5+x^7}{(1-x)^3(1-2x)^3},\\
f_{1102}(x)&=\frac{1-8x+26x^2-43x^3+38x^4-16x^5+x^6}{(1-x)^3(1-2x)^3},\\
f_{1202}(x)&=\frac{1-3x+x^2}{(1-x)(1-3x)},\\
f_{1203}(x)&=\frac{1-8x+24x^2-32x^3+17x^4-2x^5-x^6}{(1-2x)^3(1-3x+x^2)}.
\end{align*}
\end{theorem}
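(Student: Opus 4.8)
The plan is to compute each of the six generating functions directly, using the same decomposition philosophy employed throughout Section~2: since every $\pi\in B(\tau)=A_n(021,\tau)$ has its positive letters occurring in weakly increasing order, one organizes the enumeration around the runs of $0$ and the weakly increasing blocks of positive letters that separate them, absorbing each maximal nondecreasing tail by means of the jump-counting generating function $h_r(x)=\frac{x}{1-2x}\left(\frac{x}{1-x}\right)^r$ from Lemma~\ref{1000lem}. Because all six target formulas are \emph{rational}, no kernel method is required (in contrast with Theorems~\ref{1001th}, \ref{0111th}, and \ref{1110th}, whose algebraic answers forced that device); a finite case analysis followed by summation of geometric-type series should yield each $f_\tau$ in closed rational form. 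Note that $f_{1023}$ and $f_{1102}$ share the denominator $(1-x)^3(1-2x)^3$ and numerators differing only in a single high-order term, which signals that their decompositions will run in parallel but diverge in exactly one subcase.

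In each case I would first peel off the sequences that avoid $\tau$ for trivial structural reasons---the all-zero sequences, the binary sequences, or the globally nondecreasing sequences (together with $\varepsilon$)---contributing a simple factor such as $\frac{1}{1-x}$, $\frac{1-x}{1-2x}$, or $\frac{x^3}{(1-x)^2(1-2x)}$. I would then translate avoidance of $\tau$ into a precise constraint on the remaining sequences. For the patterns $1002$, $1023$, $1102$, whose denominators are powers of $(1-x)$ and $(1-2x)$, the constraint bounds the number of runs of $0$ that may follow a given positive letter and forces short staircase-like tails, so the decomposition mirrors that of Theorem~\ref{1000th}: a prefix $0^{a_0}1^{a_1}\cdots k^{a_k}$, a controlled arrangement of interior zeros, and a nondecreasing tail handled by $\lambda=\bigl(\frac{1-x}{1-2x}\bigr)^2$ or by a single $h_r$. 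For $1012$, $1202$, $1203$, whose denominators carry a factor $1-3x+x^2$ or $1-3x$, the avoidance condition instead controls how distinct positive \emph{levels} may be revisited across descents, so the natural decomposition tracks the successive blocks $1^{b_1}0^{a_1}\cdots$ and then $u^{c_1}0^{d_1}\cdots$ exactly as in Theorem~\ref{1200th}, with the first letter of the final nondecreasing block determined by the number of ascents accumulated so far and its jump budget absorbed by the appropriate $h_{\ell+p+1-v}$.

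With the decomposition fixed, the remaining work is bookkeeping: summing the resulting nested series---typically double or triple sums over run lengths, the revisited-level index $u$, the tail-start $v$, and a jump count $j$---against the closed form for $h_r$, and then collecting terms over a common denominator. I expect the main obstacle to be the one that dominated the proofs of Theorems~\ref{1200th} and \ref{0000th}: making the case division simultaneously exhaustive and mutually exclusive---in particular deciding, for each pattern, exactly when a repeated letter, an interior $0$, or a jump is permitted---and then carrying the multi-sum simplification through without error. Once the cases are correct, each sum telescopes via the geometric identities $\sum_r\rho^r$ and $\sum_r r\rho^{r}$ already used above, and partial-fraction collection over $(1-x)$, $(1-2x)$, and $(1-3x+x^2)$ reproduces the stated rational functions.
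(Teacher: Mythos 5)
Your proposal correctly identifies the method that the authors themselves indicate would be used: the paper explicitly \emph{omits} the proofs of all six formulas in this theorem, remarking only that ``these proofs may be obtained in a comparable manner as those from the prior section, though most require different divisions of the various cases.'' So your general strategy --- peel off the trivial families, decompose the rest by runs of $0$ and nondecreasing positive blocks, absorb tails with $h_r(x)$ from Lemma~\ref{1000lem}, and sum geometric series --- is exactly the approach the paper has in mind, and your grouping of the patterns by denominator type (powers of $(1-x)$, $(1-2x)$ versus factors of $1-3x+x^2$ or $1-3x$) is a sensible diagnostic.

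However, what you have written is a plan, not a proof: for none of the six patterns do you actually state the structural characterization of a $\{021,\tau\}$-avoider, the exhaustive and mutually exclusive case division, or the resulting sums. You yourself identify the case division as ``the main obstacle,'' and that obstacle is precisely the entire content of the argument --- the paper's own warning that these cases ``require different divisions'' from those in Section~2 means one cannot simply transplant the decomposition of Theorem~\ref{1000th} or~\ref{1200th}. To take one concrete instance, for $\tau=1002$ you must pin down exactly which configurations of interior zeros and subsequent larger letters are forbidden (in a $021$-avoider the ``$00$'' of the pattern is forced to be literal zeros, so the condition is that no positive letter may be followed by two zeros and then a strictly larger letter), and then verify that your cases produce the seventh-degree numerator stated; for $\tau=1202$ the strikingly simple answer $\frac{1-3x+x^2}{(1-x)(1-3x)}$ arises, as the authors note, from nontrivial cancellation that your sketch gives no mechanism for. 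Until at least one of these decompositions is carried out in full and checked against the series in Table~\ref{tab1}-style data, the proposal does not establish any of the six identities.
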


\noindent\emph{Remarks:} From the formula for $f_{1202}$, we have $b_n(1202)=\frac{3^{n-1}+1}{2}$ for $n\geq1$, which coincides with A007051 in \cite{Sloane}, offset by one.  The relatively simple formula for $f_{1202}$, compared with the other $f_\tau$, arises from some cancellation which occurs in the later steps of the proof that we found for it.  A simpler algebraic argument may be possible, or better yet, perhaps a direct bijection can be found with one of the structures already known to be enumerated by A007051.  Also, notice the uncanny similarity in the formulas for $f_{1023}$ and $f_{1102}$, with their only being a slight difference in one of the terms in the numerator.  Perhaps this could be explained via a ``near bijection'' between $B_n(1023)$ and $B_n(1102)$ which would isolate the source of the difference.


\end{document}